\numberwithin{equation}{section}
\newtheorem{theorem}{Theorem}[section]
\newtheorem{lemma}[theorem]{Lemma}
\newtheorem{corollary}[theorem]{Corollary}
\theoremstyle{definition}
\newtheorem{definition}[theorem]{Definition}
\newtheorem{construction}[theorem]{Construction}
\newtheorem{def-prop}[theorem]{Definition-Proposition}
\newtheorem{example}[theorem]{Example}
\newtheorem*{acknowledgement}{Acknowledgements}
\newtheorem{question}[theorem]{Question}
\DeclareMathOperator{\Ass}{Ass}
\DeclareMathOperator{\link}{link}
\DeclareMathOperator{\del}{del}
\DeclareMathOperator{\gr}{gr}
\DeclareMathOperator{\pol}{pol}
\newcommand{\ZZ}{{\mathbb Z}}
\newcommand{\NN}{{\mathbb N}}
\newcommand{\kk}{{\mathbbm k}}
\newcommand{\cupdot}{\mathbin{\mathaccent\cdot\cup}}
\def\K{{\mathcal K}}
\def\mm{{\frak m}}
\def\pp{{\frak p}}
\def\1{{\bf 1}}
\def\0{{\bf 0}}
\begin{document}
	
\title{Symbolic powers of cover ideals of graphs and Koszul property}

\author{Yan Gu}
\address{Soochow University, Soochow College \& School of Mathematical Sciences, Suzhou 215006, P.R. China.}
\email{guyan@suda.edu.cn}
%\urladdr{any website?}

\author{Huy T\`ai H\`a}
\address{Tulane University \\ Department of Mathematics \\
	6823 St. Charles Ave. \\ New Orleans, LA 70118, USA}
\email{tha@tulane.edu}
%\urladdr{???}

\author{Joseph W. Skelton}
\address{Tulane University \\ Department of Mathematics \\
	6823 St. Charles Ave. \\ New Orleans, LA 70118, USA}
\email{jskelton@tulane.edu}
%\urladdr{any website?}

\keywords{Symbolic powers, monomial ideal, cover ideals, Koszul modules, linear quotients, linear resolution, componentwise linear, vertex decomposable, shellable, simplicial complexes}
\subjclass[2010]{13D02, 13C13, 13H10, 05E40, 05E45}

\begin{abstract}
We show that attaching a whisker (or a pendant) at the vertices of a cycle cover of a graph results in a new graph with the following property: all symbolic powers of its cover ideal are Koszul or, equivalently, componentwise linear. This extends previous work where the whiskers were added to all vertices or to the vertices of a vertex cover of the graph.
\end{abstract}

\maketitle

%%%%%%%%%%%%%%%%%%%%%%%%%%%%%%%%%%%%%%%%%%%%%%%%%%%%

\section{Introduction} \label{sec.intro}

In this paper, we examine how simple modifications of a graph may result in nice algebraic properties of the corresponding monomial ideals. Specifically, given an arbitrary (simple and undirected) graph $G$ we present a sufficient condition on a subset $S$ of the vertices such that the graph $G \cup W(S)$ obtained from $G$ by attaching a \emph{whisker} (also referred to as \emph{pendant}) to each vertex in $S$ has the following property: all \emph{symbolic} powers of its \emph{cover ideal} are \emph{Koszul ideals}. (By attaching a whisker at a vertex $x$ in a graph $G$ we add a new vertex $y$ and the edge $\{x,y\}$ to $G$.)

Both aspects in our work, Koszul ideals, or equivalently componentwise linear ideals, and combinatorially modifying graphs for prescribed algebraic properties have long and rich histories. Koszul algebras were first introduced by Priddy \cite{Priddy1970} and later generalized to Koszul modules by Herzog and Iyengar \cite{HS2005}. This property was also considered by \c{S}ega  \cite{Sega2001} and Herzog \cite{Herzog2003}. A finitely generated $R$-module $M$, for a polynomial ring $R$ over a field $\kk$, is called a \emph{Koszul module} if its associated graded module $\gr_\mm(M) = \bigoplus_{i \ge 0} \mm^iM/\mm^{i+1}M$, with respect to the homogeneous maximal ideal $\mm$ of $R$, has a \emph{linear resolution} over the associated graded ring $\gr_\mm(R) = \bigoplus_{i \ge 0} \mm^i/\mm^{i+1}$. Koszul algebras and Koszul modules have been investigated extensively and found significant applications in many different areas of mathematics. It has been an important problem to find new classes of Koszul algebras and Koszul modules (see \cite{C2009, CC2013, CHTV1997, DAli2017, EHH2014, Nguyen2014, NTV2015} for a small sample of recent work on this problem). Our interest in this paper is on monomial ideals which are Koszul modules.

A closely related property to being Koszul for ideals is that of being Golod --- an ideal $J \subseteq R$ is called \emph{Golod} if the Koszul complex of $R/J$ with respect to the variables in $R$ admits trivial \emph{Massey operations} (see \cite{Golod1962}) or, equivalently, if the Poincar\'e series of $R/J$ coincides with the upper bound given by Serre (cf. \cite{Serre1965}). Herzog and Huneke \cite[Theorems 1.1 and 2.3]{HH2013} showed that if $\text{char } \kk = 0$ and $J$ is a homogeneous ideal in $R$ then $J^k$ and $J^{(k)}$ are Golod for all $k \ge 2$. Furthermore, Ahangari Maleki \cite[Theorem 4.7]{Maleki2019} proved that if $J$ is a proper Koszul ideal in $R$ then $J$ is Golod. To view these results under a unified perspective the following problem arises naturally: \emph{find polynomial ideals whose powers and/or symbolic powers are all Koszul}.

On the other hand, in an attempt to understand Cohen-Macaulay monomial ideals, a result of Villarreal \cite{Villarreal1990} showed that for any graph $G = (V_G,E_G)$, the graph $G \cup W(V_G)$ obtained by adding a whisker at every vertex is a \emph{Cohen-Macaulay} graph. Equivalent, the cover ideal $J(G \cup W(V_G))$ has a \emph{linear resolution}. In fact, it was later shown (see \cite{DE2009, Woodroofe2009}) that $G \cup W(V_G)$ is \emph{vertex decomposable} and \emph{shellable}. Note that, in general,
$$\text{vertex decomposable } \Rightarrow \text{ shellable } \Rightarrow \text{ sequentially Cohen-Macaulay}.$$
These studies were generalized to whiskering and partial whiskering of simplicial complexes (see \cite{BVT2013, BFHVT2015, CN2012, FH2008}). For instance, in \cite{FH2008, BFHVT2015}, necessary and sufficient conditions were given on a subset $S$ of the vertices of a simplicial complex such that adding a whisker at each vertex in $S$ results in a sequentially Cohen-Macaulay or vertex decomposable simplicial complex. These investigations lie under the same umbrella of the following question: \emph{how to combinatorially modify a graph or a simplicial complex to obtain prescribed algebraic properties of the corresponding monomial ideals}?

The aforementioned problem and question are deeply connected by the following facts: (a) a homogeneous ideal in $R$ is Koszul if and only if it is \emph{componentwise linear} (see \cite[Theorem 3.2.8]{Romer2001} and \cite[Proposition 4.9]{Ya2000}); and (b) the cover ideal of a graph is componentwise linear if and only if the graph is sequentially Cohen-Macaulay (see \cite[Theorem 2.1]{HH1999} and \cite[Corollary 4.11]{Ya2000}). Particularly, if a graph is vertex decomposable then its cover ideal is Koszul.

This connection was further illustrated in a recent work of Dung, Hien, Nguyen and Trung \cite{DHNT2019}, where it was shown that for any graph $G = (V_G,E_G)$, all symbolic powers of the cover ideal of $G \cup W(V_G)$ are Koszul. Selvaraja \cite{Selvaraja2019} then showed that one only needs to whisker at the vertices of a \emph{vertex cover} of $G$ to get a new graph for which all symbolic powers of the cover ideal are Koszul. 

By a \emph{cycle cover} of a graph $G$, we mean a subset $S$ of the vertices in $G$ such that every cycle (i.e., closed path) in $G$ contains at least a vertex in $S$. Obviously any vertex cover is a cycle cover. However, a cycle cover may have much less vertices than a vertex cover, see the following example.

\begin{example}
	\label{ex.cycleCov}
	In the graph depicted below any minimum vertex cover has 3 vertices. On the other hand, both $\{x_2\}$ and $\{x_4\}$ are cycle covers.
\begin{figure}[h]
	\centering
	\begin{tikzpicture}[scale=2.5,line cap=round,line join=round,>=triangle 45,x=1cm,y=1cm]
		\clip(0.49852191097697623,0.20073201995513232) rectangle (3.449839663356829,1.7798786614882518);
		\draw [line width=1pt] (1,0.5)-- (1,1.5);
		\draw [line width=1pt] (1,1.5)-- (2,1.5);
		\draw [line width=1pt] (2,1.5)-- (2,0.5);
		\draw [line width=1pt] (2,0.5)-- (1,0.5);
		\draw [line width=1pt] (1,0.5)-- (2,1.5);
		\draw [line width=1pt] (2,0.5)-- (3,0.5);
		\begin{scriptsize}
		\draw [fill=black] (1,0.5) circle (1pt);
		\draw[color=black] (0.90061332951202777,0.5466494578152392) node {$x_2$};
		\draw [fill=black] (1,1.5) circle (1pt);
		\draw[color=black] (0.90061332951202777,1.5466311894660747) node {$x_1$};
		\draw [fill=black] (2,1.5) circle (1pt);
		\draw[color=black] (2.1001753495416535,1.5466311894660747) node {$x_4$};
		\draw [fill=black] (2,0.5) circle (1pt);
		\draw[color=black] (2.1001753495416535,0.5466494578152392) node {$x_3$};
		\draw [fill=black] (3,0.5) circle (1pt);
		\draw[color=black] (3.1205570811924867,0.5566311894660747) node {$x_5$};
		\end{scriptsize}
	\end{tikzpicture}
		\caption{cycle covers may have much less vertices than vertex covers.}
\end{figure}
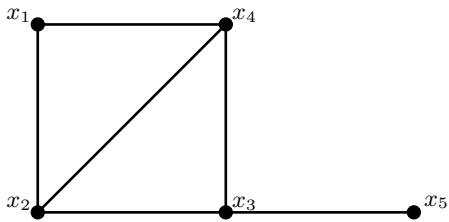
\end{example}

In this paper, we generalize the result of Selvaraja \cite{Selvaraja2019} a step further, proving that the same conclusion holds true after adding whiskers at the vertices of a cycle cover of the graph. Our main theorem is stated as follows.

\medskip

\noindent{\bf Theorem \ref{thm.main}.} Let $G$ be a graph and let $S$ be a cycle cover of $G$. Let $H$ be a graph obtained by adding at least one whisker at each vertex in $S$ and let $J$ be its cover ideal. Then, $J^{(k)}$ is Koszul for all $k \ge 1$.

\medskip

To prove Theorem \ref{thm.main}, we view the \emph{polarization} of $J^{(k)}$ as the cover ideal of a new graph $H_k$ constructed from $H$ by duplicating its vertices; this construction was introduced by Seyed Fakhari in \cite{SF2018}. This approach was also used by Selvaraja \cite{Selvaraja2019} in combination with an induction on $k$ and the size of $G$. The base case of \cite{Selvaraja2019}, after removing the vertices in a vertex cover of $G$, reduces to a graph consisting of only isolated vertices (i.e., having no edges). This method, unfortunately, does not work when $S$ is a cycle cover of $G$; removing $S$ from $G$ gives a forest. To overcome this obstruction, we further employ a recent construction of Kumar and Kumar \cite{KK2020} to consider graphs $H(k_1, \dots, k_m)$ obtained by duplicating the edges in $H$; here, $m$ is the number of edges in $H$ and $(k_1, \dots, k_m) \in \NN^m$. Note that $H_k = H(k, \dots, k).$ Theorem \ref{thm.main} is a consequence of Theorem \ref{thm.Gedge}, where it is shown that for suitable tuples $(k_1, \dots, k_m) \in \NN^m$, the graphs $H(k_1, \dots, k_m)$ are vertex decomposable.

Selvaraja \cite{Selvaraja2019} considered attaching more than just whiskers at the vertices in $G$. Specifically, whiskers were replaced by \emph{pure} and \emph{non-pure star complete graphs}. We also extend Theorem \ref{thm.main} to include the situation where non-pure star complete graphs are attached to the vertices of a cycle cover of the graph; see Theorem \ref{thm.glueStar}. To prove Theorem \ref{thm.glueStar}, we first prove a ``gluing'' result which allows us to glue two graphs with the property that symbolic powers of their cover ideals are Koszul along a leaf to obtain a new graph with the same property; see Theorem \ref{thm.Glue}.

The paper is outlined as follows. In the next section, we collect important notations, terminology and auxiliary results that will be used in the paper. In Section \ref{sec.main}, we prove our main result, Theorem \ref{thm.main}, where whiskers are attached to the vertices of a cycle cover of a graph. In Section \ref{sec.SC}, we extend Theorem \ref{thm.main} to also consider attaching star complete graphs to the vertices of the graph. We prove Theorem \ref{thm.glueStar} in this section.

\begin{acknowledgement}
The first author is supported by the National Natural Science Foundation of China (11501397). The second named author is partially supported by Louisiana Board of Regents (grant \# LEQSF(2017-19)-ENH-TR-25).
\end{acknowledgement}

%%%%%%%%%%%%%%%%%%%%%%%%%%%%%%%%%%%%

\section{Preliminaries} \label{sec.prel}

In this section, we collect basic notation and terminology used in the paper. Throughout the paper, by a \emph{graph} we mean a finite \emph{simple} and \emph{undirected} graph; that is, a finite undirected graph with no loops and no multiple edges. For a graph $G$, we use $V_G$ and $E_G$ to denote the vertex set and the edge set of $G$, respectively. Let $\kk$ be an infinite field. We shall identify the vertices $V_G = \{x_1, \dots, x_n\}$ of $G$ with the variables in the polynomial ring $R = \kk[x_1, \dots, x_n]$.

By adding a \emph{whisker} to a vertex $x$ in $G$ we mean to add a new vertex $y$ to $V_G$ and a new edge $\{x,y\}$ to $E_G$. Let $G \cup W(S)$, for a subset $S \subseteq V_G$, denote the graph obtained from $G$ by adding a whisker at each vertex in $S$.

For a subset $U$ of the vertices in $G$, the \emph{induced subgraph} of $G$ on $U$, denoted by $G_U$, is the graph whose vertex set is $U$ and edge set is $\{\{x,y\} \in E_G ~\big|~ x,y \in U\}.$ We also write $G \setminus U$ for the induced subgraph of $G$ on the complement of $U$. For a subset $F \subseteq E_G$ of the edges in $G$, let $G \setminus F$ be the graph obtained by deleting the edges in $F$ from $G$. For simplicity of notation, we write $G \setminus x$ and $G \setminus e$ for $G \setminus \{x\}$ and $G \setminus \{e\}$, where $x \in V_G$ and $e \in E_G$, respectively.

A \emph{walk} in $G$ is an alternating sequence $x_1, e_1, x_2, e_2, \dots, x_m, e_m, x_{m+1}$ of vertices and edges in $G$, in which $e_i = \{x_i, x_{i+1}\}$. Without creating any confusion, we often write $x_1, x_2, \dots, x_{m+1}$ for such a walk. We also write $xy$ for the edge $\{x,y\}$ in $G$. A \emph{path} is a walk $x_1, \dots, x_{m+1}$, in which the vertices are distinct, except possibly with $x_1 \equiv x_{m+1}$. A \emph{cycle} is a closed path; that is, a path $x_1, \dots, x_{m+1}$ with $x_1 \equiv x_{m+1}$. In this case, we write $x_1, x_2, \dots, x_m$ to denote the cycle.

% graphs, edge, cover ideals

\begin{definition}
	Let $G$ be a graph. Let $W \subseteq V_G$ be a subset of the vertices.
	\begin{enumerate}
		\item $W$ is called an \emph{independent set} if no edge in $G$ connect two vertices in $W$.
		\item $W$ is called a \emph{vertex cover} of $G$ if $G \setminus W$ consists of only isolated vertices or, equivalently, if $V_G \setminus W$ is an independent set.
		\item $W$ is called a \emph{cycle cover} of $G$ if $G \setminus W$ has no cycle.
	\end{enumerate}
\end{definition}

It is easy to see that any subset of an independent set is also an independent set. Thus, the collection of independent sets in $G$ form the faces of a \emph{simplicial complex} over $V_G$. We call this simplicial complex the \emph{independent complex} of $G$, and denote it by $\Delta(G)$.

Let $\Delta$ be a simplicial complex and let $\sigma \in \Delta$ be a face. The \emph{deletion} of $\sigma$ in $\Delta$, denoted by $\del_\Delta(\sigma)$, is the simplicial complex obtained by removing $\sigma$ and all faces containing $\sigma$ from $\Delta$. The \emph{link} of $\sigma$ in $\Delta$, denoted by $\link_\Delta(\sigma)$, is the simplicial complex whose faces are:
$$\{\eta \in \Delta ~\big|~ \eta \cap \sigma = \emptyset, \eta \cup \sigma \in \Delta\}.$$

\begin{definition}
	A simplicial complex $\Delta$ is called \emph{vertex decomposable} if either
	\begin{enumerate}
		\item $\Delta$ is a \emph{simplex}; or
		\item there is a vertex $v$ in $\Delta$ such that
		\begin{enumerate}
			\item all facets of $\del_\Delta(v)$ are facets of $\Delta$ (in this case, $v$ is called a \emph{shedding vertex} of $\Delta$), and
			\item both $\link_\Delta(v)$ and $\del_\Delta(v)$ are vertex decomposable.
		\end{enumerate}
	\end{enumerate}
\end{definition}

A graph $G$ is called \emph{vertex decomposable} if its independent complex $\Delta(G)$ is a vertex decomposable simplicial complex. Shedding vertices of $\Delta(G)$ are also referred to as shedding vertices of $G$.

The \emph{open} and \emph{closed} \emph{neighborhoods} of a vertex $x$ in a graph $G$ are defined to be
$$N_G(x) := \{y \in V_G ~\big|~ \{x,y\} \in E_G\} \text{ and } N_G[x] := N_G(x) \cup \{x\}.$$
A vertex $x$ in $G$ is called a \emph{simplicial vertex} if the induced subgraph of $G$ on $N_G[x]$ is a \emph{complete} graph. It follows from a result of Woodroofe \cite[Corollary 7]{Woodroofe2009} that neighbors of a simplicial vertex in $G$ are shedding vertices of $G$. A direct translation from $\Delta(G)$ to $G$ gives us the following definition for vertex decomposable graphs.

\begin{definition}
	\label{def.vertDec}
	A graph $G$ is said to be \emph{vertex decomposable} if either $G$ consists of isolated vertices or there is a vertex $x$ in $V(G)$ such that
	\begin{enumerate}
		\item no independent set in $G\backslash N_G[x]$ is a maximal independent set in $G\backslash x$, and
		\item $G\backslash x$ and $G\backslash N_G[x]$ are vertex decomposable.
	\end{enumerate}
\end{definition}

We now define the main objects of study in this paper, the \emph{cover ideal} of a graph and their \emph{symbolic powers}.

\begin{definition}
	Let $G$ be a graph and let $R$ be the corresponding polynomial ring. The \emph{cover ideal} $J(G)$ of $G$ is defined as follows:
	$$J(G) := \langle x_{i_1} \cdots x_{i_r} ~\big|~ \{x_{i_1}, \dots, x_{i_r}\} \text{ is a vertex cover of } G\rangle \subseteq R.$$
\end{definition}

The construction of cover ideals gives a one-to-one correspondence between graphs on $n$ vertices and height 2 squarefree monomial ideals in $n$ variables.

\begin{definition}
	Let $I \subseteq R$ be an ideal. For any $k \in \NN$, the $k$-th \emph{symbolic power} of $I$ is defined to be
	$$I^{(k)} := \bigcap_{\pp \in \Ass(R/I)} \left(I^nR_\pp \cap R\right) \subseteq R.$$
\end{definition}

For a squarefree monomial ideal $I$, $I^{(k)}$ is a monomial ideal but in general not squarefree. We will employ a commonly used process, the \emph{polarization}, to obtain a squarefree monomial ideal from $I^{(k)}$. For details of polarization we refer the reader to \cite{HH2011book}.

\begin{definition}\label{pol_def}
	Let $M=x_1^{a_1}\cdots x_n^{a_n}$ be a monomial in  $R=\kk[x_1,\dots,x_n]$. Then we define the squarefree monomial $P(M)$ (the \emph{polarization} of $M$) to be
	$$P(M)=x_{11}\cdots x_{1a_1}x_{21}\cdots x_{2a_2}\cdots x_{n1}\cdots x_{na_n}$$ in the polynomial ring $S=\kk[x_{ij} \mid 1\leq i\leq n,1\leq j\leq a_i]$.
	If $I=(M_1,\dots,M_q)$ is a monomial ideal in $R$, then the \emph{polarization} of $I$, denoted by $I^{\pol}$, is defined as $I^{\pol}=(P(M_1),\dots,P(M_q))$ sitting in an appropriate polynomial ring.
\end{definition}

Polarization is useful in our study, particularly thanks to the following simple lemma.

\begin{lemma}%[\protect{\cite[Lemma 3.5]{SF2018}}]
	\label{lem.SF}
Let $J$ be a monomial ideal such that $J^{\pol}$ is the cover ideal of a graph $G$. If $G$ is vertex decomposable then $J$ is Koszul.
\end{lemma}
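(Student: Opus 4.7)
The plan is to chain together four well-known equivalences/implications, each of which is already cited earlier in the paper. First, by Woodroofe's work (referenced in Section~\ref{sec.prel}) and the standard hierarchy
$$\text{vertex decomposable} \Rightarrow \text{shellable} \Rightarrow \text{sequentially Cohen-Macaulay},$$
the assumption that $G$ is vertex decomposable implies that $\Delta(G)$, and hence $G$, is sequentially Cohen-Macaulay.

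Next, I would invoke the Herzog--Hibi/Yanagawa characterization cited in the introduction: the cover ideal of a sequentially Cohen-Macaulay graph is componentwise linear. Applying this to $G$ yields that $J^{\pol} = J(G)$ is componentwise linear.

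The third step is to transport componentwise linearity back across polarization. The key input is the standard fact that the graded Betti numbers of a monomial ideal $I$ and of its polarization $I^{\pol}$ agree (see \cite{HH2011book}), and moreover this correspondence respects the degree filtration: the component $I_{\langle d \rangle}$ generated by the degree-$d$ generators of $I$ polarizes to $(I^{\pol})_{\langle d \rangle}$. Consequently, $I$ is componentwise linear if and only if $I^{\pol}$ is. Applying this equivalence to $J$ shows that $J$ itself is componentwise linear.

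Finally, by the R\"omer--Yanagawa equivalence (fact (a) recalled in the introduction), a homogeneous ideal is Koszul if and only if it is componentwise linear, so $J$ is Koszul, completing the proof. The only step that requires any care is the third one, namely the compatibility of polarization with the componentwise filtration; everything else is a direct quotation of results already listed earlier in the paper.
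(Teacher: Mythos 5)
Your first two steps are fine: vertex decomposability of $G$ gives shellability and hence sequential Cohen--Macaulayness of the independence complex, and the Herzog--Hibi/Yanagawa duality then shows that $J^{\pol}=J(G)$ is componentwise linear. The gap is in your third step. The identity you rely on --- that $I_{\langle d\rangle}$ polarizes to $(I^{\pol})_{\langle d\rangle}$ --- is false. Already for $I=(x^2)\subseteq \kk[x]$ one has $I^{\pol}=(x_1x_2)$, so $(I^{\pol})_{\langle 3\rangle}=(x_1^2x_2,\,x_1x_2^2)$ has two minimal generators and a first syzygy, whereas $(I_{\langle 3\rangle})^{\pol}=(x_1x_2x_3)$ is principal; the two ideals do not even live in the same ring and have different Betti numbers. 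Consequently, the statement you actually need --- that componentwise linearity of $I^{\pol}$ forces componentwise linearity of $I$ --- is not a formal consequence of the preservation of graded Betti numbers under polarization, because the componentwise pieces themselves are not preserved. Whether componentwise linearity transfers across polarization is a genuinely delicate question, and no such transfer result is among the facts quoted in the paper.

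This is precisely why the paper's proof of Lemma~\ref{lem.SF} takes a stronger route: vertex decomposability implies shellability, and by \cite[Theorem~8.2.5]{HH2011book} the cover ideal $J^{\pol}$ of a shellable complex has \emph{linear quotients}; linear quotients, unlike componentwise linearity, is known to descend from $J^{\pol}$ to $J$ by \cite[Lemma~3.5]{SF2018}; and an ideal with linear quotients is componentwise linear, hence Koszul, by \cite[Theorem~8.2.15]{HH2011book} together with the R\"omer--Yanagawa equivalence. To repair your argument you would either have to supply a proof (or reference) that componentwise linearity descends under depolarization, or upgrade ``sequentially Cohen--Macaulay'' to ``shellable'' and work with linear quotients, as the paper does.
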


\begin{proof} By \cite[Theorem 8.2.5]{HH2011book}, $J^{\pol}$ has linear quotients (see \cite[Section 8.2.1]{HH2011book} for more details on ideals with \emph{linear quotients}). This, together with \cite[Lemma 3.5]{SF2018}, implies that $J$ has linear quotients. The conclusion now follows from \cite[Theorem 8.2.15]{HH2011book}.
\end{proof}

% Needed constructions.

Finally, our inductive argument for the vertex decomposability makes use of the following simple observation of Selvaraja \cite{Selvaraja2019}, which follows directly from the definition.

\begin{lemma}[\protect{\cite[Lemma 3.4]{Selvaraja2019}}]
	\label{lem.Sel4}
	Let $G$ be a graph and suppose that $\{z_1, \dots, z_m\} \subseteq V_G$. Set $\Psi_0 = G$,
	$$\Psi_i = \Psi_{i-1} \setminus z_i \text{ and } \Omega_i = \Psi_{i-1} \setminus N_{\Psi_{i-1}}[z_i] \text{ for } 1 \le i \le m.$$
	Suppose that,
	\begin{enumerate}
		\item[(a)] $z_i$ is a shedding vertex of $\Psi_{i-1}$ for all $1 \le i \le m$,
		\item[(b)] $\Omega_i$ is vertex decomposable for all $1 \le i \le m$, and
		\item[(c)] $\Psi_m$ is vertex decomposable.
	\end{enumerate}
Then, $G$ is vertex decomposable.
\end{lemma}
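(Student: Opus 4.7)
The plan is to proceed by straightforward induction on $m$, since the hypotheses (a), (b), (c) have been engineered precisely to match, at each stage, the three requirements in the recursive definition of a vertex decomposable simplicial complex (Definition 2.4).

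For the base case $m = 1$, I would simply unwind the definitions. In that case $\Psi_m = \Psi_1 = G \setminus z_1$ and $\Omega_1 = G \setminus N_G[z_1]$. Hypothesis (a) gives that $z_1$ is a shedding vertex of $G = \Psi_0$; hypothesis (c) gives that $\Psi_1 = G \setminus z_1$ is vertex decomposable; and hypothesis (b) gives that $\Omega_1 = G \setminus N_G[z_1]$ is vertex decomposable. These are exactly the two clauses in Definition 2.4 (applied to $\Delta(G)$ via the translation in Definition 2.5), so $G$ is vertex decomposable.

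For the inductive step, assume the lemma holds for all sequences of length $m - 1$. I would apply the inductive hypothesis to the graph $\Psi_1 = G \setminus z_1$ equipped with the truncated sequence $z_2, \dots, z_m$. The key observation, which is immediate from the recursive definitions of $\Psi_i$ and $\Omega_i$, is that the sequence of graphs built from $\Psi_1$ using $z_2, \dots, z_m$ coincides with $\Psi_2, \dots, \Psi_m$ and $\Omega_2, \dots, \Omega_m$ from the original construction; thus the shifted sequence inherits hypotheses (a), (b), (c) from the original one. By induction, $\Psi_1$ is vertex decomposable. Now the base case argument applied to $G$ with the vertex $z_1$ --- using that $z_1$ is a shedding vertex of $\Psi_0 = G$ by (a), that $\Omega_1$ is vertex decomposable by (b), and that $\Psi_1$ is vertex decomposable as just shown --- yields that $G$ itself is vertex decomposable.

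I do not expect any real obstacle here: the result is essentially a bookkeeping lemma that repackages the recursive definition into an iterable form, and the only thing that needs careful verification is that shifting the index of the sequence genuinely produces the same auxiliary graphs, which is formal.
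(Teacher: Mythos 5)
Your induction is correct and is exactly the ``follows directly from the definition'' argument the paper intends: the paper cites this as \cite[Lemma 3.4]{Selvaraja2019} without reproducing a proof, and your base case matches Definition \ref{def.vertDec} verbatim while the inductive step correctly observes that the auxiliary graphs for the truncated sequence starting at $\Psi_1$ coincide with $\Psi_2,\dots,\Psi_m$ and $\Omega_2,\dots,\Omega_m$. No gaps.
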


%\begin{lemma}[\protect{\cite[Lemma 3.5]{Selvaraja2019}}]
%	\label{lem.Sel5}
%	Let $G$ be a vertex decomposable graph and let $x$ be a simplicial vertex in $G$. If $y \in N_G(x)$ then $G \setminus y$ is vertex decomposable.
%\end{lemma}

%%%%%%%%%%%%%%%%%%%%%%%%%%%%%%%%%%%%

\section{Whiskers at Cycle Covers and Koszul Property} \label{sec.main}

In this section, we prove our main result, Theorem \ref{thm.main}, that whiskering at a cycle cover of a graph results in a new graph with the special property that all symbolic powers of its cover ideal are Koszul.

Our work is based on the following constructions introduced by Seyed Fakhari in \cite{SF2018} and by Kumar and Kumar in \cite{KK2020}, which allow us to view the polarization of symbolic powers of the cover ideal of a graph as the cover ideal of new graphs obtained by duplicating vertices and edges of the given graph.

\begin{construction}[Duplicating the vertices]
	\label{con.Gk}
	Let $G$ be a graph over the vertex set $V_G = \{x_1,\dots,x_n\}$ and let $k\geq 1$ be an integer. Construct a new graph $G_k$ whose vertex set and edge set are given as follows:
	\begin{align*}
	V_{G_k} & =\{x_{i,p}\mid 1\leq i\leq n \text{ and }1\leq p\leq k\}, \text{ and } \\
	E_{G_k} & =\big\{\{x_{i,p},x_{j,q}\}\mid \{x_i,x_j\}\in E(G) \text{ and } p+q \leq k+1\big\}.
	\end{align*}
\end{construction}

In Construction \ref{con.Gk}, we call $x_{i,p}$'s the \emph{shadows} of $x_i$. It follows from \cite[Lemma 3.4]{SF2018} that the polarization of $J(G)^{(k)}$ is precisely the cover ideal of $G_k$.

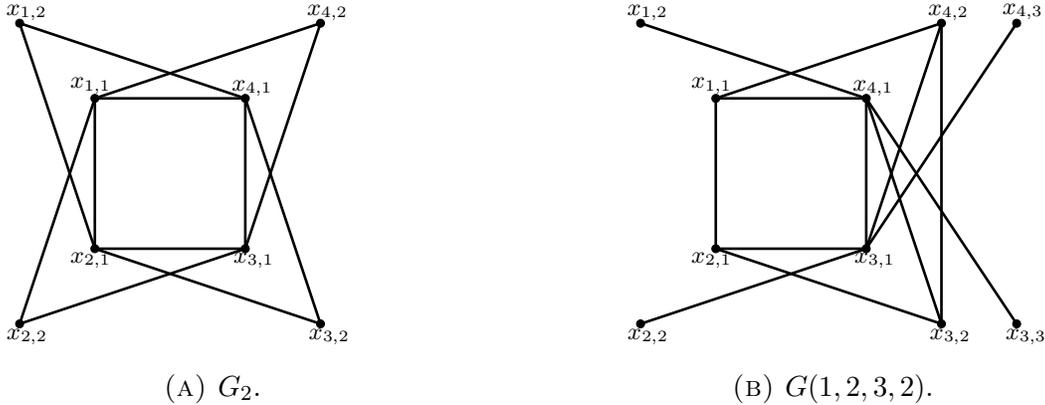
\begin{figure}[h]
	\centering
	\begin{subfigure}[h]{.5\linewidth}
		\centering
		\begin{tikzpicture}[line cap=round,line join=round,>=triangle 45,x=1cm,y=1cm]
		\clip(-0.5633338763149934,-0.420323501532376) rectangle (8.101523972750211,4.4916381232948694);
		\draw [line width=1pt] (2,1)-- (2,3);
		\draw [line width=1pt] (2,3)-- (4,3);
		\draw [line width=1pt] (4,3)-- (4,1);
		\draw [line width=1pt] (4,1)-- (2,1);
		\draw [line width=1pt] (2,3)-- (5,4);
		\draw [line width=1pt] (4,3)-- (1,4);
		\draw [line width=1pt] (4,1)-- (1,0);
		\draw [line width=1pt] (2,1)-- (5,0);
		\draw [line width=1pt] (4,1)-- (5,4);
		\draw [line width=1pt] (4,3)-- (5,0);
		\draw [line width=1pt] (1,0)-- (2,3);
		\draw [line width=1pt] (2,1)-- (1,4);
		\begin{scriptsize}
		\draw [fill=black] (2,1) circle (1.5pt);
		\draw[color=black] (1.9432857157360122,0.8491225317084326) node {$x_{2,1}$};
		\draw [fill=black] (2,3) circle (1.5pt);
		\draw[color=black] (1.9432857157360122,3.1665900561163856) node {$x_{1,1}$};
		\draw [fill=black] (4,3) circle (1.5pt);
		\draw[color=black] (4.092620584789849,3.1384574007622774) node {$x_{4,1}$};
		\draw [fill=black] (4,1) circle (1.5pt);
		\draw[color=black] (4.115126709073135,0.8491225317084326) node {$x_{3,1}$};
		\draw [fill=black] (5,4) circle (1.5pt);
		\draw[color=black] (5.094143115396087,4.13997993136852) node {$x_{4,2}$};
		\draw [fill=black] (1,4) circle (1.5pt);
		\draw[color=black] (1.0936795240419563,4.13997993136852) node {$x_{1,2}$};
		\draw [fill=black] (1,0) circle (1.5pt);
		\draw[color=black] (1.099306055112778,-0.173653061039452833) node {$x_{2,2}$};
		\draw [fill=black] (5,0) circle (1.5pt);
		\draw[color=black] (5.116649239679373,-0.173653061039452833) node {$x_{3,2}$};
		\end{scriptsize}
		\end{tikzpicture}
		\caption{$G_2$.}\label{fig:Gk}
	\end{subfigure}%	
	\begin{subfigure}[h]{.5\linewidth}
		\centering
		\begin{tikzpicture}[line cap=round,line join=round,>=triangle 45,x=1cm,y=1cm]
		\clip(-0.5633338763149934,-0.4203235015323746) rectangle (8.101523972750211,4.4916381232948694);
		\draw [line width=1pt] (2,1)-- (2,3);
		\draw [line width=1pt] (2,3)-- (4,3);
		\draw [line width=1pt] (4,3)-- (4,1);
		\draw [line width=1pt] (4,1)-- (2,1);
		\draw [line width=1pt] (2,3)-- (5,4);
		\draw [line width=1pt] (4,3)-- (1,4);
		\draw [line width=1pt] (4,1)-- (1,0);
		\draw [line width=1pt] (2,1)-- (5,0);
		\draw [line width=1pt] (4,1)-- (5,4);
		\draw [line width=1pt] (4,3)-- (5,0);
		\draw [line width=1pt] (4,1)-- (6,4);
		\draw [line width=1pt] (4,3)-- (6,0);
		\draw [line width=1pt] (5,0)-- (5,4);
		\begin{scriptsize}
		\draw [fill=black] (2,1) circle (1.5pt);
		\draw[color=black] (1.9432857157360122,0.8491225317084326) node {$x_{2,1}$};
		\draw [fill=black] (2,3) circle (1.5pt);
		\draw[color=black] (1.9432857157360122,3.166590056116386) node {$x_{1,1}$};
		\draw [fill=black] (4,3) circle (1.5pt);
		\draw[color=black] (4.092620584789849,3.138457400762278) node {$x_{4,1}$};
		\draw [fill=black] (4,1) circle (1.5pt);
		\draw[color=black] (4.115126709073135,0.8491225317084326) node {$x_{3,1}$};
		\draw [fill=black] (5,4) circle (1.5pt);
		\draw[color=black] (5.094143115396087,4.13997993136852) node {$x_{4,2}$};
		\draw [fill=black] (1,4) circle (1.5pt);
		\draw[color=black] (1.0936795240419563,4.13997993136852) node {$x_{1,2}$};
		\draw [fill=black] (1,0) circle (1.5pt);
		\draw[color=black] (1.099306055112778,-0.173653061039452833) node {$x_{2,2}$};
		\draw [fill=black] (5,0) circle (1.5pt);
		\draw[color=black] (5.116649239679373,-0.173653061039452833) node {$x_{3,2}$};
		\draw [fill=black] (6,4) circle (1.5pt);
		\draw[color=black] (6.0787860527898605,4.156859524580986) node {$x_{4,3}$};
		\draw [fill=black] (6,0) circle (1.5pt);
		\draw[color=black] (6.123798301356433,-0.173653061039452833) node {$x_{3,3}$};
		\end{scriptsize}
		\end{tikzpicture}
		\caption{$G(1,2,3,2)$.}\label{fig:Gedge}
	\end{subfigure}
	\caption{Construction of $G_k$ and $G(k_1, \dots, k_m)$.} \label{fig.duplication}
\end{figure}	

\begin{construction}[Duplicating the edges]
	\label{con.Gedge}
	Let $G$ be a simple graph with vertex set $V_G = \{x_1,\dots, x_n\}$ and edge set $E_G = \{e_1,\dots, e_m\}$.
	\begin{enumerate}
		\item Let $r \in \ZZ_{\ge 0}$ and consider any edge $e = \{x_i, x_j\}$ in $G$. Set
		\begin{align*}
		V(e(r)) & = \{x_{k,p} ~\big|~ k \in \{i,j\} \text{ and } 1 \le p \le r\}, \text{ and } \\
		E(e(r)) & = \{\{x_{i,p}, x_{j,q}\} ~\big|~ p+q \le r+1\}.
		\end{align*}
		\item For an ordered tuple $(k_1, \dots, k_m) \in \ZZ^m_{\ge 0}$, construct a new graph $G(k_1, \dots, k_m)$ whose vertex set and edge set are given by:
		\begin{align*}
		V_{G(k_1, \dots, k_m)} & = \bigcup_{i=1}^m V(e_i(k_i)), \text{ and } \\
		E_{G(k_1, \dots, k_m)} & = \bigcup_{i=1}^m E(e_i(k_i)).
		\end{align*}
	\end{enumerate}
\end{construction}

In Construction \ref{con.Gedge}, we call $x_{i,p}$'s the \emph{shadows} of $x_i$ and edges in $E(e_i(k_i))$ the \emph{shadows} of $e_i$. It is an easy observation that $G_k = G(\underbrace{k, \dots, k}_{m \text{ times}})$. Particularly, for any $k \ge 1$, we have
\begin{align}
\left(J(G)^{(k)}\right)^{\pol} = J(G(\underbrace{k, \dots, k}_{m \text{ times}})). \label{eq.polarization}
\end{align}

\begin{example}
	\label{ex.conDif}
	Let the graph $G=C_4$ with edge set $e_i = \{x_i,x_{i+1}\}$ for $i = 1,2,3$ and $e_4 = \{x_1,x_4\}$. Then the graphs depicted in Figure \ref{fig.duplication} highlight the differences in the Construction \ref{con.Gk} and \ref{con.Gedge}.
\end{example}

The next few lemmas are adaptation of similar statements for vertex-duplication graphs $G_k$ from \cite{Selvaraja2019} to edge-duplication graphs $G(k_1, \dots, k_m)$.

\begin{lemma}[\protect{Compare with \cite[Lemma 3.2]{Selvaraja2019}}]
	\label{lem.Sel2}
	Let $H$ be a graph over the vertex set $V_H=\{x_1,...,x_n\}$ with $m-1$ edges $e_1, \dots, e_{m-1}$. Let $G=H \cup W(x_1)$ and set $e_m=\{x_1,y_1\}$ be the new whisker at $x_1$ in $G$. Let $G(k_1,...,k_m)$ be constructed as in Construction \ref{con.Gedge}, for some tuple $(k_1, \dots, k_m) \in \NN^m$. Let $\{x_{1,p},y_{1,q}\}$ being the shadows of $\{x_1,y_1\}$, for $p+q\leq k_m+1$. Then, for any $i = 1,...,k_m$, $x_{1,i}$ is a shedding vertex of $G(k_1,...,k_m) \backslash \{x_{1,1},...,x_{1,i-1}\}$.
\end{lemma}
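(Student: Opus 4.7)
The plan is to produce a simplicial vertex of $G':=G(k_1,\ldots,k_m)\backslash\{x_{1,1},\ldots,x_{1,i-1}\}$ that is adjacent to $x_{1,i}$, and then invoke Woodroofe's observation (recalled just before Definition \ref{def.vertDec}) that every neighbor of a simplicial vertex is a shedding vertex. This would immediately give the conclusion.

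The natural candidate is the shadow $y_{1,\,k_m+1-i}$ of the whisker endpoint. Because $y_1$ is a leaf in $G$ with unique neighbor $x_1$, no edge of $G$ other than $e_m$ is incident to $y_1$; consequently, the only edges in $G(k_1,\ldots,k_m)$ incident to any shadow of $y_1$ are those produced by Construction \ref{con.Gedge} from $e_m$, namely the pairs $\{x_{1,p},y_{1,q}\}$ with $p+q\leq k_m+1$. Setting $q=k_m+1-i$, which lies in $\{1,\ldots,k_m\}$ by the hypothesis $1\leq i\leq k_m$, the complete list of neighbors of $y_{1,\,k_m+1-i}$ in $G(k_1,\ldots,k_m)$ is $\{x_{1,p}: 1\leq p\leq i\}=\{x_{1,1},\ldots,x_{1,i}\}$.

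After deleting $x_{1,1},\ldots,x_{1,i-1}$ to form $G'$, the vertex $y_{1,\,k_m+1-i}$ retains only $x_{1,i}$ as a neighbor. Hence $N_{G'}[y_{1,\,k_m+1-i}]=\{y_{1,\,k_m+1-i},x_{1,i}\}$ induces the complete graph $K_2$, making $y_{1,\,k_m+1-i}$ a simplicial vertex of $G'$. Woodroofe's corollary then yields that its neighbor $x_{1,i}$ is a shedding vertex of $G'$, which is the desired conclusion.

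The argument is short, and the only real content is identifying the correct shadow of $y_1$ to serve as the simplicial witness at stage $i$; once one matches $q=k_m+1-i$, the hypothesis that $y_1$ is a whisker forces $y_{1,\,k_m+1-i}$ to be a leaf in $G'$ and everything else is immediate. I do not expect any serious obstacle in this particular lemma; the heavier bookkeeping associated with iterating this shedding procedure and tracking what remains after the subsequent links and deletions is naturally deferred to later lemmas invoking the iterative framework of Lemma \ref{lem.Sel4}.
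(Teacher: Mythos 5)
Your proof is correct and follows essentially the same route as the paper: both identify the shadow $y_{1,k_m+1-i}=y_{1,k_m-(i-1)}$ as a simplicial vertex of $G(k_1,\dots,k_m)\setminus\{x_{1,1},\dots,x_{1,i-1}\}$ whose unique surviving neighbor is $x_{1,i}$, and then apply Woodroofe's observation that neighbors of simplicial vertices are shedding vertices. No gaps.
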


\begin{proof}
By construction, for any $i = 1, \dots, k_m$ we have
$$N_{G(k_1,...,k_m)\backslash\{x_{1,1},...,x_{1,i-1}\}}(y_{1,k_m-(i-1)}) = \{x_{1,j} \mid j+k_m-(i-1)\leq k_m+1\}=\{x_{1,i}\}.$$
This implies that $y_{1,k_m-(i-1)}$ is a simplicial vertex of $G(k_1,...,k_m)\backslash\{x_{1,1},...,x_{1,i-1}\}$. It now follows from \cite[Corollary 7]{Woodroofe2009} that $x_{1,i}$ is a shedding vertex of $G(k_1,...,k_m)\backslash\{x_{1,1},...,x_{1,i-1}\}$.
\end{proof}

\begin{lemma}[\protect{Compare with \cite[Lemma 3.3]{Selvaraja2019}}]
	\label{lem.Sel3}
	Let $G$ be a graph with $m$ edges and let $G(k_1, \dots, k_m)$ be constructed as in Construction \ref{con.Gedge}, for some tuple $(k_1, \dots, k_m) \in \NN^m$. Let $x_i$ be a vertex in $G$ and suppose that $x_i$ is incident to an edge $e_t$, for $1 \le t \le m$.
\begin{enumerate}
	\item If $k_t\leq k$ then
	$$G(k_1,...,k_m)\backslash \{x_{i,1},..,x_{i,k}\} \simeq (G\backslash e_t)(k_1',...,k_{t-1}', 0, k_{t+1}',...,k_m') \cup \{\text{isolated vertices}\}.$$
	\item If $k_t>k$ then
	$$G(k_1,...,k_m)\backslash \{x_{i,1},...,x_{i,k}\}\simeq G(k_1',...,k_{t-1}', k_t-k,k_{t+1}',...,k_m') \cup \{\text{isolated vertices}\}.$$
\end{enumerate}
Here, $k_i' \le k_i$ for all $i = 1, \dots, m$, and $k_l' = \max \{0, k_l - k\}$ for all edges $e_l$ that are adjacent to $x_i$.
\end{lemma}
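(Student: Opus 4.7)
The plan is to establish both claimed isomorphisms by the same edge-by-edge bookkeeping argument, distinguishing the two cases only in whether the edge $e_t$ leaves a nontrivial residue. Throughout, write $e_l = \{x_i, x_{j_l}\}$ for the edges of $G$ incident to $x_i$, and set $X = \{x_{i,1}, \dots, x_{i,k}\}$ for the deleted shadows. Since the vertex set and edge set of $G(k_1,\dots,k_m)$ decompose as the unions $\bigcup_{l=1}^m V(e_l(k_l))$ and $\bigcup_{l=1}^m E(e_l(k_l))$, it suffices to analyze the effect of deleting $X$ inside each piece $E(e_l(k_l))$ separately.

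First I would handle the easy case: if $e_l$ is not incident to $x_i$, then no edge in $E(e_l(k_l))$ involves any shadow of $x_i$, so this piece is untouched, contributing $k_l' = k_l$ in the target tuple. Next, for $e_l = \{x_i, x_{j_l}\}$, observe that an edge $\{x_{i,p}, x_{j_l,q}\} \in E(e_l(k_l))$ survives the deletion of $X$ if and only if $p > k$. Combined with the constraint $p+q \le k_l + 1$, this yields the dichotomy in the lemma: if $k_l \le k$, no edge of $E(e_l(k_l))$ survives, matching $k_l' = 0 = \max\{0, k_l - k\}$; whereas if $k_l > k$, the surviving edges are precisely those $\{x_{i,p}, x_{j_l,q}\}$ with $k < p \le k_l$ and $p+q \le k_l + 1$. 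In the latter case, the reindexing $p \mapsto p' := p - k$ transforms this set into edges $\{x_{i,p'}, x_{j_l,q}\}$ with $p'+q \le (k_l - k) + 1$, which is exactly $E(e_l(k_l - k))$.

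This motivates the isomorphism $\phi$ that realizes the two claimed identifications: define $\phi(x_{i,p}) = x_{i,p-k}$ for every remaining shadow of $x_i$ (i.e.\ for $p > k$), and $\phi(x_{j,q}) = x_{j,q}$ for every shadow of a vertex $x_j \ne x_i$. Applying the analysis of the previous paragraph piecewise over all $e_l$ shows that $\phi$ sends the edge set of $G(k_1, \dots, k_m) \setminus X$ bijectively onto the edge set of $G(k_1', \dots, k_t - k, \dots, k_m')$ in Case 2, and onto the edge set of $(G \setminus e_t)(k_1', \dots, 0, \dots, k_m')$ in Case 1, since in Case 1 the edge $e_t$ contributes no surviving edges at all and may therefore be removed from $G$ before performing the edge-duplication. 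Any shadow $x_{j_l,q}$ (or leftover shadow $x_{i,p}$ with $p > k_l$) that does not lie in the image of $\phi$ — equivalently, every vertex all of whose original incident edges have been killed by the deletion of $X$ — contributes an isolated vertex, which accounts for the $\cup \{\text{isolated vertices}\}$ in the statement.

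The main obstacle is not conceptual but combinatorial: one must be careful that a single vertex $x_{j,q}$ for $j \ne i$ may appear as a shadow inside several pieces $E(e_l(k_l))$ (whenever $x_j$ is incident to more than one edge of $G$), so the determination of which $x_{j,q}$ become isolated must be made globally, by checking that all edges of $G$ incident to $x_j$ have been killed on the shadow $q$ in question, and not edge-by-edge. Once this bookkeeping is carried out, the verification that $\phi$ is a graph isomorphism is immediate from the dichotomy above, and the two cases of the lemma follow.
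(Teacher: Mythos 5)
Your proposal is correct and follows essentially the same route as the paper's own proof: decompose the effect of deleting $\{x_{i,1},\dots,x_{i,k}\}$ piecewise over the edge-duplications $E(e_l(k_l))$, observe that an edge $\{x_{i,p},x_{j,q}\}$ survives iff $p>k$, and relabel $p\mapsto p-k$ to identify the result with a duplication by the smaller factors, with leftover shadows becoming isolated vertices. Your explicit description of the isomorphism $\phi$ and the remark that isolation of a shadow $x_{j,q}$ must be checked globally across all edges incident to $x_j$ are welcome refinements of the same argument, not a different one.
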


\begin{proof} Observe first that the deletion of $\{x_{i,1}, \dots, x_{i,k}\}$ from $G(k_1, \dots, k_m)$ only affects the duplication of edges that are adjacent to $x_i$ and shadows of their vertices. Consider such an edge $e_l = \{x_i, x_j\}$.
	
If $k_l \le k$ then removing $\{x_{i,1}, \dots, x_{i,k}\}$ from $G(k_1, \dots, k_m)$ also means removing all shadows of $x_i$ that were introduced in the process of duplicating $e_l$ (by a factor $k_l$). That is, there is no duplication of $e_l$ that survives in $G(k_1, \dots, k_m) \setminus \{x_{i,1}, \dots, x_{i,k}\}$. Particularly, when $k_t \le k$, removing $\{x_{i,1}, \dots, x_{i,k}\}$ from $G(k_1, \dots, k_m)$ gives a duplication of the graph $G \setminus e_t$. Furthermore, the deletion of $\{x_{i,1}, \dots, x_{i,k}\}$ from $G(k_1, \dots, k_m)$ also leaves shadows $x_{j,q}$'s of $x_j$, if $q > k_u$ for any edge $e_u$ adjacent to $x_j$, to be isolated vertices in $G(k_1, \dots, k_m) \setminus \{x_{i,1}, \dots, x_{i,k}\}$.

On the other hand, if $k_l > k$ then removing $\{x_{i,1}, \dots, x_{i,k}\}$ from $G(k_1, \dots, k_m)$ has the same effect as duplicating $e_l$ by a factor $k_l' = k_l -k$ instead of $k_l$, after relabeling $x_{i,k+1}, \dots, x_{i,k_l}$ as $x_{i,1}, \dots, x_{i,k_l-k}$, and possibly adding some isolated shadows of $x_j$ (again, those shadows are $x_{j,q}$ if $q > k_u$ for any edge $e_u$ adjacent to $x_j$). Particularly, the statement when $k_t > k$ follows.
\end{proof}

As a direct consequence of Lemma \ref{lem.Sel3}, we get the following corollary.

\begin{corollary}
	\label{cor.Sel3}
	Let $G$ be a graph with $m$ edges $e_1, \dots, e_m$. Let $x_i$ be a vertex in $G$.
	\begin{enumerate}
		\item If $k_l \le k$ for all edges $e_l$ adjacent to $x_i$ then
		$$G(k_1,...,k_m)\backslash\{x_{i,1},...,x_{i,k}\}\simeq (G\backslash x_i)(k_1',...,k_m') \cup \{\text{isolated vertices}\},$$
		where $k_i'\leq k_i$ for all $i$.
		\item $G(k_1,...,k_m) \backslash N_{G(k_1,...,k_m)}[x_{i,1}] \simeq (G\backslash N_G[x_1])(k_1',...,k_m') \cup \{x_{i,2},...,x_{i,k_\alpha}\}$ where $\alpha = \max\{k_l ~\big|~ e_l \text{ is adjacent to } x_i\}.$
	\end{enumerate}
\end{corollary}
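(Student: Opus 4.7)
The plan is to derive both statements as iterated applications of Lemma \ref{lem.Sel3}, carefully tracking which shadows survive each deletion and which become isolated.

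For Part (1), I would apply Lemma \ref{lem.Sel3}(1) simultaneously to every edge $e_l$ adjacent to $x_i$. By hypothesis $k_l\le k$ for every such edge, so all shadows of $x_i$ that actually appear in $G(k_1,\dots,k_m)$ (namely $x_{i,1},\dots,x_{i,\alpha}$ with $\alpha=\max\{k_l : e_l\ni x_i\}\le k$) lie inside $\{x_{i,1},\dots,x_{i,k}\}$ and are deleted. Since every edge in $E(e_l(k_l))$ for $e_l\ni x_i$ has one endpoint of the form $x_{i,p}$, no duplication of such an $e_l$ survives; in effect we replace $k_l$ by $k_l'=0$ for each edge incident to $x_i$ while leaving $k_l'=k_l$ for the other edges. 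The residual graph is therefore a duplication of $G\setminus x_i$ together with those shadows of neighbors of $x_i$ that fail to appear in any surviving edge duplication. These leftover shadows are precisely the claimed isolated vertices.

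For Part (2), the idea is to decompose the deletion $G(k_1,\dots,k_m)\setminus N[x_{i,1}]$ into a sequence of deletions to which Lemma \ref{lem.Sel3} applies directly. The first step is to observe that
$$N_{G(k_1,\dots,k_m)}[x_{i,1}]=\{x_{i,1}\}\cup\bigcup_{x_j\in N_G(x_i)}\{x_{j,1},\dots,x_{j,k_l}\},$$
where $e_l=\{x_i,x_j\}$ in each term of the union. I would delete $x_{i,1}$ first (which by Lemma \ref{lem.Sel3}(2) simply reduces the factor on edges adjacent to $x_i$) and then delete the shadows of each neighbor $x_j$ in turn. Each of these deletions falls under one of the two cases of Lemma \ref{lem.Sel3}, and after all deletions every duplication of an edge incident to $x_i$ or to a neighbor of $x_i$ (with factor at most $k_l$) has been stripped out. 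The only shadows of $x_i$ still present are $x_{i,2},\dots,x_{i,\alpha}$; they are isolated because every shadow of a neighbor of $x_i$ that could possibly connect to them has been removed. The surviving nontrivial edge duplications are precisely those supported on edges of $G\setminus N_G[x_i]$, yielding the stated isomorphism up to isolated vertices.

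The main obstacle is the bookkeeping in Part (2): a shadow $x_{j,q}$ of a neighbor $x_j$ with $q>k_l$ may persist if $x_j$ is incident to another edge $e_u$ with $k_u>k_l$. One has to justify that such persistent shadows can be absorbed into $(G\setminus N_G[x_i])(k_1',\dots,k_m')$ by a suitable choice of $k_l'\le k_l$ (typically by taking $k_l'=0$ on edges incident to $N_G[x_i]$ and reducing the remaining factors accordingly), possibly at the cost of further isolated vertices that are implicit in the $\simeq$. Verifying that no unintended edges survive and that the residual $x_i$-shadows are exactly $\{x_{i,2},\dots,x_{i,\alpha}\}$ is the only nontrivial check; everything else is a direct reading of Lemma \ref{lem.Sel3}.
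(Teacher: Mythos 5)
Your proposal is correct and follows essentially the same route as the paper: part (1) is a direct application of Lemma \ref{lem.Sel3} to each edge incident to $x_i$, and part (2) starts from the same identification of $N_{G(k_1,\dots,k_m)}[x_{i,1}]$ as $\{x_{i,1}\}\cup\bigcup\{x_{j,1},\dots,x_{j,k_l}\}$ and then applies Lemma \ref{lem.Sel3} repeatedly. Your version is in fact more explicit than the paper's about why the residual shadows $x_{i,2},\dots,x_{i,\alpha}$ are isolated and about the bookkeeping of the $k_l'$, which is a welcome addition rather than a deviation.
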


\begin{proof} (1) It follows from Lemma \ref{lem.Sel3} that removing $\{x_{i,1}, \dots, x_{i,k}\}$ from $G(k_1, \dots, k_m)$ will delete all shadows of $x_i$ introduced in duplicating $e_l$ for any edge $e_l$ adjacent to $x_i$. Particularly, all shadows of such $e_l$ are deleted. The statement follows immediately.
	
	(2) Observe that $N_{G(k_1, \dots, k_m)}[x_{i,1}]$ consists of $x_{i,1}$ together with $\{x_{j,1}, \dots, x_{j,k_l}\}$, for all edges $e_l = \{x_i, x_j\}$ adjacent to $x_i$. The statement follows by applying the proof of Lemma \ref{lem.Sel3} repeatedly.
\end{proof}

The following theorem gives a more general statement for our main result in Theorem~\ref{thm.main}. Let us fix some terminology. Let $G$ be graph and let $S$ be a given cycle cover of $G$. We say that a tuple of nonnegative integers $(k_1, \dots, k_m)$, where $m$ is the number of edges in $G \cup W(S)$, satisfies \emph{condition $(\star)$} if for any vertex $x \in S$, the number of shadows of the whisker at $x$ in the graph $(G \cup W(S))(k_1, \dots, k_m)$ is bigger than or equal to that of any edge incident to $x$ in $G$. That is, if $k_i$ represents the number of shadows of the whisker at $x$ and $k_j$ is the number of shadows of any other edge incident to $x$ in $G$, in the construction of $(G \cup W(S))(k_1, \dots, k_m)$, then $k_i \ge k_j$. A particular situation of our interest, where this condition holds, is when $k_1 = \dots = k_m$.

\begin{theorem}
	\label{thm.Gedge}
	Let $G$ be a graph and let $S$ be a cycle cover of $G$. For any tuple of nonnegative integers $(k_1, \dots, k_m)$, where $m$ is the number of edges in $G \cup W(S)$, that satisfies condition $(\star)$, the graph $(G \cup W(S))(k_1, \dots, k_m)$ is vertex decomposable.
\end{theorem}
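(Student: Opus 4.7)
The plan is to prove the theorem by strong induction on the lexicographic pair $(|V_G|, \sum_{l=1}^m k_l)$, applying Lemma \ref{lem.Sel4} with a single shedding vertex at each step. The base case covers graphs with no edges: then $(G \cup W(S))(k_1,\dots,k_m)$ is a disjoint union of copies of $e(k_z)$ (one per whisker at $z \in S$) together with isolated vertices, and each $e(k)$ is readily shown to be vertex decomposable (with $x_1$ a shedding vertex, since $y_k$ is a simplicial neighbor), hence so is the disjoint union.

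For the inductive step, pick a vertex $x$ of $H := G \cup W(S)$ having a leaf neighbor $y$ in $H$. If $S \neq \emptyset$, take $x = z \in S$ and $y$ its whisker partner, so condition $(\star)$ forces the shadow count of $e = \{z,y\}$ to dominate that of every edge adjacent to $z$; if $S = \emptyset$, then $G$ is a forest and we pick any leaf $y$ of $G$ with unique neighbor $x$. Apply Lemma \ref{lem.Sel4} with the single shedding vertex $x_1$. Condition (a) is immediate from Lemma \ref{lem.Sel2}. For (c), Lemma \ref{lem.Sel3} identifies $\Psi := H(k_1,\dots,k_m) \setminus x_1$ with $H(k') \cup \{\text{isolated vertices}\}$, where $k'_l = \max(0, k_l-1)$ for each edge $e_l$ adjacent to $x$ and $k'_l = k_l$ otherwise; all $x$-adjacent parameters are reduced uniformly, so $(\star)$ is preserved, and $\sum k'_l < \sum k_l$, so the inductive hypothesis applies to $\Psi$.

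The main obstacle is (b): identifying the link $\Omega := H(k_1,\dots,k_m) \setminus N[x_1]$ with a duplication small enough to invoke the inductive hypothesis. Iterating Lemma \ref{lem.Sel3} across the edges incident to $x$ (in the spirit of the proof of Corollary \ref{cor.Sel3}(2)) yields an isomorphism $\Omega \simeq ((G \setminus x) \cup W(S \setminus x))(\tilde k) \cup \{\text{isolated vertices}\}$, where $\tilde k_l$ is $k_l$ reduced by the shadow counts of the edges from the endpoints of $e_l$ back to $x$. Now $S \setminus x$ is a cycle cover of $G \setminus x$ (any cycle in $G \setminus x$ is a cycle of $G$ avoiding $x$, hence meets $S \setminus x$), and $|V_{G \setminus x}| < |V_G|$; the delicate check is that $\tilde k$ satisfies $(\star)$, which holds because for each $z' \in S \setminus x$ the reduction applied to the whisker at $z'$ is at most that applied to any other $z'$-adjacent edge, giving $\tilde k_{w(z')} - \tilde k_l \ge k_{w(z')} - k_l \ge 0$ by the original $(\star)$ at $z'$. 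Combining (a)--(c) with Lemma \ref{lem.Sel4} then yields vertex decomposability of $H(k_1,\dots,k_m)$.
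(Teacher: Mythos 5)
Your argument is correct in its essentials but follows a genuinely different induction from the paper's. The paper inducts on the number of cycles $c(G)$: it fixes a cycle-cover vertex $x_1$ lying on a cycle and feeds \emph{all} $k$ of its shadows $x_{1,1},\dots,x_{1,k}$ into Lemma \ref{lem.Sel4} at once, so that the final deletion $\Psi_k$ kills every duplicated edge at $x_1$ and the cycle count strictly drops; the base case (forests) is then outsourced to \cite[Theorem 3.3]{KK2020}. You instead induct on the lexicographic pair $(|V_G|,\sum_l k_l)$ and remove a \emph{single} shadow per step: the deletion $\Psi$ only decrements the multiplicities of the $x$-adjacent edges (second coordinate drops), while the link $\Omega$ removes the vertex $x$ itself (first coordinate drops). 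Both routes rest on the same computational lemmas (Lemma \ref{lem.Sel2} for the shedding vertex via the simplicial last shadow of the whisker, Lemma \ref{lem.Sel3} for identifying $\Psi$ and $\Omega$ with smaller duplications), and your identification $\Omega\simeq((G\setminus x)\cup W(S\setminus x))(\tilde k)\cup\{\text{isolated vertices}\}$ agrees with the $(G'\setminus x_1)(k'')$ appearing in the paper's proof. What your version buys is self-containedness --- the forest case is absorbed into the same induction rather than cited --- and an explicit verification that condition $(\star)$ survives the reductions, a point the paper passes over in silence; the cost is extra bookkeeping around degenerate multiplicities.

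One such degenerate point you should address explicitly: your appeal to Lemma \ref{lem.Sel2} for condition (a) needs the chosen leaf edge to carry at least one shadow, since the simplicial vertex produced there is $y_{1,k_m}$, which does not exist when $k_m=0$. If the whisker at the chosen $z\in S$ has multiplicity $0$, condition $(\star)$ forces every $G$-edge at $z$ to have multiplicity $0$ as well, so $z$ contributes no shadows and $(G\cup W(S))(k_1,\dots,k_m)$ is already a duplication of $(G\setminus z)\cup W(S\setminus z)$ up to isolated vertices; similarly, in the forest case a leaf edge of multiplicity $0$ simply deletes that leaf. In either situation the induction closes on a strictly smaller instance, but without this remark the shedding-vertex step as written is not available. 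With that caveat supplied, the proposal is a valid alternative proof.
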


\begin{proof} We shall use induction on the number $c(G)$ of cycles in $G$. If $c(G) = 0$ then $G$ is a forest and the statement follows from \cite[Theorem 3.3]{KK2020}. Suppose that $c(G) \ge 1$. Set $G' = G \cup W(S)$ and $H = G'(k_1, \dots, k_m)$.
	
We shall apply Lemma \ref{lem.Sel4} to show that $H$ is vertex decomposable. Without loss of generality, we may assume that $x_1\in S$ and $x_1$ belongs to a cycle in $G$. Set $z_i = x_{1,i}$, for $i = 1, \dots, k.$ Let $\Psi_0 = H$, $\Psi_i = \Psi_{i-1} \setminus z_i$ and $\Omega_i = \Psi_{i-1} \setminus N_{\Psi_{i-1}}[z_i]$, for $i = 1, \dots, k$.

It follows from Lemma \ref{lem.Sel2}, $z_i$ is a shedding vertex of $\Psi_{i-1} $. By a re-indexing, if necessary, we may also assume that $e_1, \dots, e_\gamma$, for $\gamma \le \ell$, are the edges on cycles in $G$ which are adjacent to $x_1$ and the whisker at $x_1$. By applying Lemma \ref{lem.Sel3} repeatedly, we have
$$\Psi_k = H\backslash\{z_1,\dots,z_k\}\cong (G'\backslash \{e_1, \dots, e_\gamma\})(k_1',...,k_m') \cup \{\text{isolated vertices}\},$$
where $k_l'\leq k_l$ for all $l = 1, \dots, m$. Observe that $G' \setminus \{e_1, \dots, e_\gamma\} = (G \setminus \{e_1, \dots, e_\gamma\}) \cup \{\text{an isolated vertex}\} \cup W(S \setminus x_1)$ and obviously $c(G\backslash \{e_1, \dots, e_\gamma\}) < c(G)$. Thus, by the induction hypothesis, $\Psi_k$ is vertex decomposable.

In light of Lemma \ref{lem.Sel4}, it suffices to show that $\Omega_i$ is vertex decomposable for all $i = 1, \dots, k$. Indeed, since $N_H(z_j) \subset N_H(z_i)$ for $i\leq j$, we have
$$\Omega_i = H \backslash (\{z_1,...,z_{i-1}\} \cup N_{H}[z_i]) = (G'\backslash x_1)(k_1'',...,k_m'')\cup\{z_{i+1},..,z_k,\mbox{other isolated vertices}\},$$
where $k_l''\leq k_l$ for all $l = 1, \dots, m$. Moreover, $G' \setminus x_1 = (G \setminus x_1) \cup \{\text{an isolated vertex}\} \cup W(S \setminus x_1)$ and clearly $c(G\backslash x_1) < c(G)$. Therefore, by the induction hypothesis, $\Omega_i$ is vertex decomposable for all $i = 1,\dots, k$. The theorem completes.
\end{proof}

As a consequence of Theorem \ref{thm.Gedge}, we obtain a slight generalization of \cite[Theorem 4.3]{KK2020}.

\begin{corollary}
	\label{cor.unicyclic}
		Let $G$ be a unicyclic graph on $m$ edges, with the unique cycle $C = (x_1, \dots, x_{\ell-1})$ and at least a whisker attached to $C$. Suppose that $E_C = \{e_1, \dots, e_{\ell-1}\}$, where $e_j = \{x_j, x_{j+1}\}$ for $j = 1, \dots, \ell-2$ and $e_{\ell-1} = \{x_{\ell-1}, x_1\}$, and a whisker attached to $C$ is $e_{\ell} = \{x_1, x_{\ell}\}$. Then, for any tuple of nonnegative integers $(k_1, \dots, k_m)$ such that $k_\ell \ge k_i$, for all $1 \le i \le m$, the graph $G(k_1, \dots, k_m)$ is vertex decomposable.
\end{corollary}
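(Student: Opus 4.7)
The plan is to deduce the corollary directly from Theorem~\ref{thm.Gedge} by realizing $G$ as $G'' \cup W(S)$ for a carefully chosen cycle cover $S$ of a subgraph $G'' \subseteq G$, and then checking that the hypothesis on the tuple $(k_1,\dots,k_m)$ forces condition $(\star)$.

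First I would set $G'' = G \setminus \{x_\ell\}$ and $S = \{x_1\}$. Since $x_\ell$ is a leaf of $G$ (it belongs only to the whisker edge $e_\ell$) and does not lie on $C$, the graph $G''$ is still unicyclic with the same unique cycle $C = (x_1, \dots, x_{\ell-1})$. Because $x_1 \in V_C$, deleting $x_1$ from $G''$ destroys $C$, so $S$ is a cycle cover of $G''$. By construction, $G$ is obtained from $G''$ by attaching a single whisker at $x_1$, namely the edge $e_\ell = \{x_1, x_\ell\}$; equivalently, $G = G'' \cup W(S)$ up to relabeling the leaf introduced by $W$.

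Next I would verify condition $(\star)$ at the only vertex of $S$. The whisker at $x_1$ in the decomposition $G = G'' \cup W(S)$ is $e_\ell$, whose shadow count in $G(k_1,\dots,k_m)$ equals $k_\ell$. The hypothesis $k_\ell \ge k_i$ for all $1 \le i \le m$ ensures in particular that $k_\ell \ge k_j$ for every edge $e_j$ of $G$ incident to $x_1$, which is exactly what $(\star)$ demands at $x_1$. Theorem~\ref{thm.Gedge} then applies to the triple $(G'', S, (k_1,\dots,k_m))$, yielding that $(G'' \cup W(S))(k_1,\dots,k_m) = G(k_1,\dots,k_m)$ is vertex decomposable.

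I do not expect any real obstacle here; the argument is essentially bookkeeping. The only point to check carefully is that the hypothesis $k_\ell \ge k_i$ for all $i$ is precisely strong enough to give $(\star)$ at $x_1$, and that choosing the singleton $S=\{x_1\}$ is legitimate because $x_1$ already sits on the unique cycle $C$ and already carries the distinguished whisker $e_\ell$. Other whiskers or trees that may be present in $G$ simply become part of $G''$ and do not enter the condition $(\star)$.
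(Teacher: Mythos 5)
Your proposal is correct and matches the paper's own argument, which simply observes that $S=\{x_1\}$ is a cycle cover and invokes Theorem~\ref{thm.Gedge}; you have merely spelled out the bookkeeping (peeling off $e_\ell$ as the whisker and checking condition $(\star)$ at $x_1$) that the paper leaves implicit.
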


\begin{proof} It is easy to see that $S = \{x_1\}$ is a cycle cover of $G$. The assertion follows directly from Theorem \ref{thm.Gedge}.
\end{proof}

%\begin{remark}
%	\label{rem.looserCond}
%	The condition $(\star)$ in Theorem \ref{thm.Gedge} and Corollary \ref{cor.unicyclic} can be loosened. In fact, for the same conclusion to hold, it suffices to require that the constant corresponding to the whisker at a vertex $x \in S$ is greater than or equal to the constant corresponding to any edge on a cycle in $G$ that is adjacent to $x$. The same proof works in this case also.
%\end{remark}

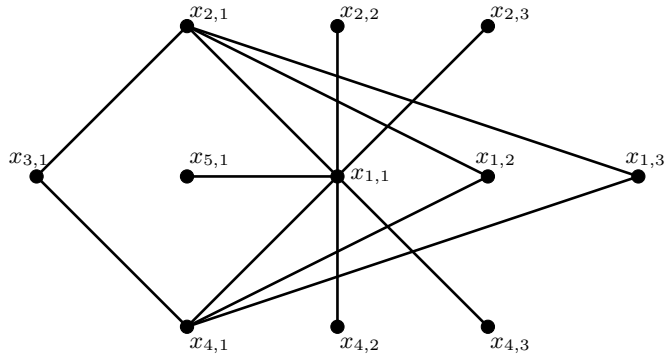
\begin{figure}[h]
	\centering
	\begin{tikzpicture}[line cap=round,line join=round,>=triangle 45,x=1cm,y=1cm]
	\clip(0.06211360697921014,-0.8323164815230916) rectangle (9.604291383294111,4.576982998167132);
	\draw [line width=1pt] (1,2)-- (3,4);
	\draw [line width=1pt] (3,4)-- (5,2);
	\draw [line width=1pt] (5,2)-- (3,0);
	\draw [line width=1pt] (3,0)-- (1,2);
	\draw [line width=1pt] (5,2)-- (3,2);
	\draw [line width=1pt] (5,2)-- (5,4);
	\draw [line width=1pt] (5,2)-- (5,0);
	\draw [line width=1pt] (3,0)-- (7,2);
	\draw [line width=1pt] (7,2)-- (3,4);
	\draw [line width=1pt] (5,2)-- (7,4);
	\draw [line width=1pt] (5,2)-- (7,0);
	\draw [line width=1pt] (3,0)-- (9,2);
	\draw [line width=1pt] (9,2)-- (3,4);
	\begin{scriptsize}
	\draw [fill=black] (1,2) circle (2.5pt);
	\draw[color=black] (0.9017013269082159,2.21312532176184) node {$x_{3,1}$};
	\draw [fill=black] (3,4) circle (2.5pt);
	\draw[color=black] (3.301359190928859,4.152541973701176) node {$x_{2,1}$};
	\draw [fill=black] (5,2) circle (2.5pt);
	\draw[color=black] (5.445251080932077,2.0) node {$x_{1,1}$};
	\draw [fill=black] (3,0) circle (2.5pt);
	\draw[color=black] (3.301359190928859,-0.24849473561150702) node {$x_{4,1}$};
	\draw [fill=black] (3,2) circle (2.5pt);
	\draw[color=black] (3.301359190928859,2.21312532176184) node {$x_{5,1}$};
	\draw [fill=black] (5,4) circle (2.5pt);
	\draw[color=black] (5.301359190928859,4.152541973701176) node {$x_{2,2}$};
	\draw [fill=black] (5,0) circle (2.5pt);
	\draw[color=black] (5.301359190928859,-0.24849473561150702) node {$x_{4,2}$};
	\draw [fill=black] (7,2) circle (2.5pt);
	\draw[color=black] (7.104116881512902,2.21312532176184) node {$x_{1,2}$};
	\draw [fill=black] (7,4) circle (2.5pt);
	\draw[color=black] (7.301359190928859,4.152541973701176) node {$x_{2,3}$};
	\draw [fill=black] (7,0) circle (2.5pt);
	\draw[color=black] (7.301359190928859,-0.24849473561150702) node {$x_{4,3}$};
	\draw [fill=black] (9,2) circle (2.5pt);
	\draw[color=black] (9.099299507469654,2.21312532176184) node {$x_{1,3}$};
	\end{scriptsize}
	\end{tikzpicture}
	\caption{$(G\cup W(S))(3,1,1,3,1)$ is not vertex decomposable.} \label{fig.ex3.10}
\end{figure}

\begin{example}
Condition $(\star)$ on the tuple $(k_1, \dots, k_m)$ in Theorem \ref{thm.Gedge} and Corollary \ref{cor.unicyclic} might be loosened but not removed entirely. Consider $G=C_4$. Then $G\cup W(x_1)$ has a whisker on $x_1$ and meets the cycle cover condition of Theorem \ref{thm.Gedge}. However, letting $e_i = \{x_i, x_{i+1}\}$ for $i=1,2,3$, $e_4 = \{x_1,x_4\}$, and $e_5 =\{x_1,x_5\}$ we can easily verify that $(G\cup W(x_1))(3,1,1,3,1)$, as depicted in Figure \ref{fig.ex3.10}, is not vertex decomposable. This is because $x_{1,1}$ is the only shedding vertex in $(G \cup W(x_1))(3,1,1,3,1)$, and $(G\cup W(x_1))(3,1,1,3,1)\backslash x_{1,1}$ contains no shedding vertices.
\end{example}

Our main result is now a direct consequence of Theorem \ref{thm.Gedge}.

\begin{theorem}
	\label{thm.main}
	Let $G$ be a graph and let $S$ be a cycle cover of $G$. Let $H$ be a graph obtained by adding at least one whisker at each vertex in $S$ and let $J$ be its cover ideal. Then, $J^{(k)}$ is Koszul for all $k \ge 1$.
\end{theorem}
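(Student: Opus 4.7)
The plan is to derive Theorem \ref{thm.main} as a short consequence of Theorem \ref{thm.Gedge}, combined with the polarization identity \eqref{eq.polarization} and Lemma \ref{lem.SF}. First I would handle the mild subtlety that $H$ may carry more than one whisker at some $x \in S$. Let $G'$ be the graph obtained from $G$ by absorbing all but one of the whiskers attached at each $x \in S$; then $H = G' \cup W(S)$ by construction, and since the absorbed edges are leaves they lie on no cycle, so $S$ remains a cycle cover of $G'$. Thus the pair $(G', S)$ satisfies the hypotheses of Theorem \ref{thm.Gedge}.

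Next, fix $k \ge 1$ and let $m$ denote the number of edges of $H$. Taking the constant tuple $(k_1, \dots, k_m) = (k, \dots, k)$, condition $(\star)$ holds trivially since all coordinates are equal. Applying Theorem \ref{thm.Gedge} to $(G', S)$ with this tuple yields that $H(k, \dots, k) = (G' \cup W(S))(k, \dots, k)$ is vertex decomposable. By the polarization identity \eqref{eq.polarization}, the polarization of $J(H)^{(k)}$ is precisely the cover ideal $J(H(k, \dots, k))$ of this vertex-decomposable graph. Lemma \ref{lem.SF} then immediately gives that $J(H)^{(k)}$ is Koszul for every $k \ge 1$.

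Since all of the substantive combinatorial work is packaged in Theorem \ref{thm.Gedge}, the remaining argument is essentially bookkeeping, and there is no serious obstacle. The one point requiring care is the reduction to the form $G' \cup W(S)$ demanded by Theorem \ref{thm.Gedge}: one must verify that reassigning the extra whiskers into the underlying graph preserves the cycle-cover property. This is painless precisely because whiskers are leaves and leaves lie on no cycle, so absorbing them neither creates new cycles nor destroys the cycle cover $S$.
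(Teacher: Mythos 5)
Your proposal is correct and follows essentially the same route as the paper: reduce to the case $H = G' \cup W(S)$ by observing that extra whiskers (being leaves) preserve the cycle-cover property, then apply Theorem \ref{thm.Gedge} with the constant tuple $(k,\dots,k)$, the polarization identity \eqref{eq.polarization}, and Lemma \ref{lem.SF}. The paper phrases the reduction slightly differently (a cycle cover of $G$ remains a cycle cover after adding whiskers), but the content is identical.
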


\begin{proof} Observe first that a cycle cover of $G$ remains a cycle cover after adding whiskers to the vertices in $G$. Thus, it suffices to prove the statement for $H = G \cup W(S)$. Let $m$ be the number of edges in $H$. By Theorem \ref{thm.Gedge}, for any $k \ge 1$, $H_k = H(\underbrace{k, \dots, k}_{m \text{ times}})$ is vertex decomposable. This, together with (\ref{eq.polarization}) and Lemma \ref{lem.SF}, implies that $J^{(k)}$ are Koszul for all $k \ge 1$.
\end{proof}

Theorem \ref{thm.main} may not necessarily be true if $S$ is not a cycle cover of $G$. As shown in our next example, the graph in Figure \ref{fig.ex1} gives such an instance.

\begin{example}
	The condition that $S$ be a cycle cover of $G$ is necessary in Theorem \ref{thm.main}. Let $G$ be the ``fish'' graph (a 4-cycle and 3-cycle attached at a vertex) depicted in Figure \ref{fig.ex1}. Let $S=\{x_5\}$. Then $S$ is not a cycle cover of $G$. A quick computation shows that $(G\cup W(S))_2$ is not vertex decomposable; that is $J(G \cup W(S))^{(2)}$ is not Koszul. Note that $G \cup W(S)$ is vertex decomposable in this case.
	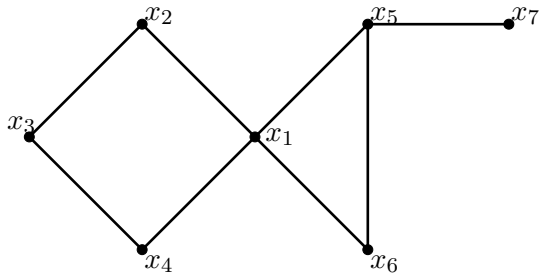
\begin{figure}[h]
		\centering
		\begin{tikzpicture}[scale =.75, line cap=round,line join=round,>=triangle 45,x=1cm,y=1cm]
		\clip(0.49584896044806986,-0.603056366118118) rectangle (10.038026736762967,4.806243113572106);
		\draw [line width=1pt] (1,2)-- (3,4);
		\draw [line width=1pt] (3,4)-- (5,2);
		\draw [line width=1pt] (5,2)-- (3,0);
		\draw [line width=1pt] (3,0)-- (1,2);
		\draw [line width=1pt] (5,2)-- (7,4);
		\draw [line width=1pt] (5,2)-- (7,0);
		\draw [line width=1pt] (7,0)-- (7,4);
		\draw [line width=1pt] (7,4)-- (9.5,4);
		\begin{small}
		\draw [fill=black] (1,2) circle (2.5pt);
		\draw[color=black] (0.8676221205642347,2.213125321761844) node {$x_{3}$};
		\draw [fill=black] (3,4) circle (2.5pt);
		\draw[color=black] (3.301359190928859,4.152541973701176) node {$x_{2}$};
		\draw [fill=black] (5,2) circle (2.5pt);
		\draw[color=black] (5.445251080932077,2.0) node {$x_{1}$};
		\draw [fill=black] (3,0) circle (2.5pt);
		\draw[color=black] (3.301359190928859,-0.24849473561150702) node {$x_{4}$};
		\draw [fill=black] (7,4) circle (2.5pt);
		\draw[color=black] (7.301359190928859,4.152541973701176) node {$x_5$};
		\draw [fill=black] (7,0) circle (2.5pt);
		\draw[color=black] (7.301359190928859,-0.24849473561150702) node {$x_6$};
		\draw [fill=black] (9.5,4) circle (2.5pt);
		\draw[color=black] (9.801359190928859,4.152541973701176) node {$x_7$};
		\end{small}
		\end{tikzpicture}
		\caption{$(G\cup W(S))_2$ is not vertex decomposable when $S$ is not a cycle cover.} \label{fig.ex1}
	\end{figure}
\end{example}

%%%%%%%%%%%%%%%%%%%%%%%%%%%%%%5

\section{Star Complete Graphs and Whiskering} \label{sec.SC}

A whisker can be viewed as a complete graph over two vertices. In this section, we shall see that the conclusion of Theorems \ref{thm.Gedge} and \ref{thm.main} can be made stronger by allowing the addition of larger complete graphs to a vertex rather than just whiskers. This is inspired by the work of Selvaraja \cite{Selvaraja2019}, where pure and non-pure star complete graphs (see Definition \ref{def.SC}) are attached to the vertices of the given graph.

Our method is based on the following construction of ``gluing'' graphs along an edge.

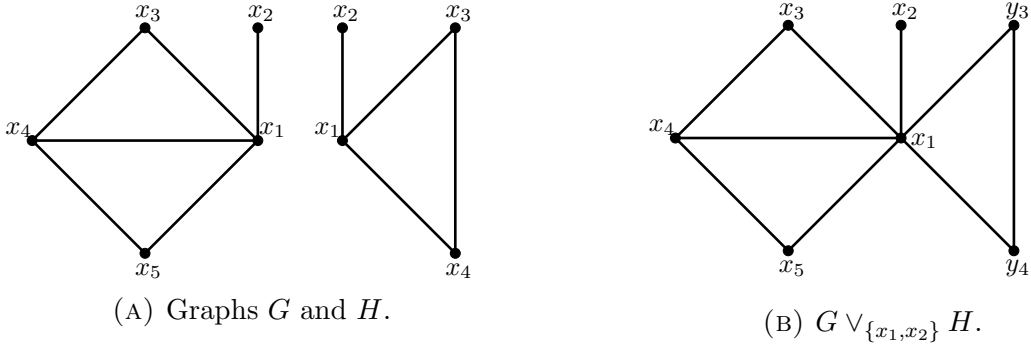
\begin{figure}[h]
	\centering
	\begin{subfigure}[h]{.5\linewidth}
		\centering
		\begin{tikzpicture}[scale =.75, line cap=round,line join=round,>=triangle 45,x=1cm,y=1cm]
		\clip(0.478169148775289,0.5490366834038811) rectangle (9.303249168333453,5.6413152325484095);
		\draw [line width=1pt] (3,1)-- (1,3);
		\draw [line width=1pt] (1,3)-- (3,5);
		\draw [line width=1pt] (3,5)-- (5,3);
		\draw [line width=1pt] (5,3)-- (3,1);
		\draw [line width=1pt] (5,3)-- (1,3);
		\draw [line width=1pt] (5,3)-- (5,5);
		\draw [line width=1pt] (6.5,3)-- (8.5,5);
		\draw [line width=1pt] (8.5,5)-- (8.5,1);
		\draw [line width=1pt] (8.5,1)-- (6.5,3);
		\draw [line width=1pt] (6.5,3)-- (6.5,5);
		\begin{footnotesize}
		\draw [fill=black] (3,1) circle (2.5pt);
		\draw[color=black] (3.0646318960217744,.6894875332764085) node {$x_5$};
		\draw [fill=black] (1,3) circle (2.5pt);
		\draw[color=black] (0.7657397505996578,3.1883796786985254) node {$x_4$};
		\draw [fill=black] (3,5) circle (2.5pt);
		\draw[color=black] (3.0646318960217744,5.253032320216959) node {$x_3$};
		\draw [fill=black] (5,3) circle (2.5pt);
		\draw[color=black] (5.263524041443892,3.1883796786985254) node {$x_1$};
		\draw [fill=black] (5,5) circle (2.5pt);
		\draw[color=black] (5.063524041443892,5.253032320216959) node {$x_2$};
		\draw [fill=black] (6.5,3) circle (2.5pt);
		\draw[color=black] (6.245850362203109,3.1883796786985254) node {$x_1$};
		\draw [fill=black] (8.5,5) circle (2.5pt);
		\draw[color=black] (8.564742507625224,5.253032320216959) node {$x_3$};
		\draw [fill=black] (8.5,1) circle (2.5pt);
		\draw[color=black] (8.564742507625224,.6894875332764085) node {$x_4$};
		\draw [fill=black] (6.5,5) circle (2.5pt);
		\draw[color=black] (6.565850362203109,5.253032320216959) node {$x_2$};
		\end{footnotesize}
		\end{tikzpicture}
		\caption{Graphs $G$ and $H$.}\label{fig:Glue1}
	\end{subfigure}%	
	\begin{subfigure}[h]{.5\linewidth}
		\centering
		\begin{tikzpicture}[scale = .75, line cap=round,line join=round,>=triangle 45,x=1cm,y=1cm]
		\clip(-0.5908972311468136,0.29002257307484725) rectangle (9.57290700315693,5.829693940988442);
		\draw [line width=1pt] (3,1)-- (1,3);
		\draw [line width=1pt] (1,3)-- (3,5);
		\draw [line width=1pt] (3,5)-- (5,3);
		\draw [line width=1pt] (5,3)-- (3,1);
		\draw [line width=1pt] (5,3)-- (1,3);
		\draw [line width=1pt] (5,3)-- (5,5);
		\draw [line width=1pt] (5,3)-- (7,5);
		\draw [line width=1pt] (7,5)-- (7,1);
		\draw [line width=1pt] (7,1)-- (5,3);
		\begin{footnotesize}
		\draw [fill=black] (3,1) circle (2.5pt);
		\draw[color=black] (3.071256774946963,.6894875332764085) node {$x_5$};
		\draw [fill=black] (1,3) circle (2.5pt);
		\draw[color=black] (0.7657397505996578,3.1883796786985254) node {$x_4$};
		\draw [fill=black] (3,5) circle (2.5pt);
		\draw[color=black] (3.071256774946963,5.253032320216959) node {$x_3$};
		\draw [fill=black] (5,5) circle (2.5pt);
		\draw[color=black] (5.0748265391504415,5.253032320216959) node {$x_2$};
		\draw [fill=black] (5,3) circle (2.5pt);
		\draw[color=black] (5.413524041443892,2.950000000000000) node {$x_1$};
		\draw [fill=black] (7,5) circle (2.5pt);
		\draw[color=black] (7.071761966386361,5.253032320216959) node {$y_3$};
		\draw [fill=black] (7,1) circle (2.5pt);
		\draw[color=black] (7.071761966386361,.6894875332764085) node {$y_4$};
		\end{footnotesize}
		\end{tikzpicture}
		\caption{$G\vee_{\{x_1,x_2\}}H$.}\label{fig:Glue2}
	\end{subfigure}
	\caption{The gluing of $G$ and $H$ along $e=\{x_1,x_2\}$.} \label{fig.Glue}
\end{figure}	

\begin{definition}
	Let $G$ and $H$ be graphs over the vertex set $V = \{x_1, \dots, x_n\}$, and let $e = \{x_i, x_j\}$ be a common edge of $G$ and $H$. We construct the \emph{gluing of $G$ and $H$ along $e$}, denoted by $G \vee_e H$, as follows. Let $H'$ be an isomorphic copy of $H$ on the vertex set $V' = \{y_1, \dots, y_n\}$. Then, $G \vee_e H$ is obtained from the disjoint union $G \cupdot H'$ by identifying $x_i$ with $y_i$ and $x_j$ with $y_j$, respectively.
\end{definition}

\begin{example}
	Let $G$ and $H$ be graphs depicted as in Figure \ref{fig:Glue1} and let $e = \{x_1, x_2\}$. Figure \ref{fig:Glue2} illustrates the gluing of $G$ and $H$ along $e$.
\end{example}

Recall that an edge of a graph $G$ is called a \emph{leaf} if one of its endpoints has degree 1. If $e = \{x,y\}$ is a leaf of $G$, where $\deg_G(y) = 1$, then we call $y$ a \emph{leaf vertex} of $G$.

\begin{theorem}
	\label{thm.Glue}
	Let $G$ and $H$ be graphs over the same vertex set with edge sets $\{e, e_2, \dots, e_r\}$ and $\{e, e_2', \dots, e'_s\}$, where $e= \{x_u, x_v\}$ is a common leaf of $G$ and $H$ (with $x_v$ being the leaf vertex). Suppose also that for $(\ell, k_2, \dots, k_r) \in \ZZ^r_{\ge 0}$ and $(\ell, h_2, \dots, h_s) \in \ZZ^s_{\ge 0}$, where $\ell \ge k_i$ and $\ell \ge h_j$ for all $i$ and $j$, the shadows $\{x_{u,1}, \dots, x_{u,\ell}\}$ of $x_u$ in $G(\ell, k_2, \dots, k_r)$ and $H(\ell, h_2, \dots, h_s)$ satisfy the conditions of Lemma \ref{lem.Sel4}. Then, the shadows $\{x_{u,1}, \dots, x_{u,\ell}\}$ of $x_u$ in $(G \vee_e H)(\ell, k_2, \dots, k_r, h_2, \dots, h_s)$ also satisfy the conditions of Lemma \ref{lem.Sel4}. Particularly, $(G \vee_e H)(\ell, k_2, \dots, k_r, h_2, \dots, h_s)$ is vertex decomposable.
\end{theorem}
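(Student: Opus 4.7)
The plan is to verify the three conditions of Lemma \ref{lem.Sel4} for the sequence $z_i = x_{u,i}$, $i = 1, \dots, \ell$, in the graph $\Gamma := (G \vee_e H)(\ell, k_2, \dots, k_r, h_2, \dots, h_s)$. Set $\Psi_0 = \Gamma$, $\Psi_i = \Psi_{i-1} \setminus z_i$, and $\Omega_i = \Psi_{i-1} \setminus N_{\Psi_{i-1}}[z_i]$. I will also denote the analogous objects computed inside $G(\ell, k_2, \dots, k_r)$ and $H(\ell, h_2, \dots, h_s)$ by $\Psi_i^G, \Omega_i^G$ and $\Psi_i^H, \Omega_i^H$; by hypothesis each of these sequences satisfies the conditions of Lemma \ref{lem.Sel4}, so in particular each $\Omega_i^G, \Omega_i^H, \Psi_\ell^G, \Psi_\ell^H$ is vertex decomposable.

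For condition (a), I would use the fact that since $e$ is a leaf of both $G$ and $H$ with $x_v$ the leaf vertex, $x_v$ remains a leaf vertex of $G \vee_e H$; consequently in $\Gamma$ the shadows of $x_v$ are adjacent only to shadows of $x_u$, via the duplication of $e$. Exactly as in the proof of Lemma \ref{lem.Sel2}, after removing $\{x_{u,1}, \dots, x_{u,i-1}\}$ the shadow $x_{v, \ell - i + 1}$ has $x_{u,i}$ as its unique remaining neighbor, so it is a simplicial vertex of $\Psi_{i-1}$, and \cite[Corollary 7]{Woodroofe2009} implies that $z_i = x_{u,i}$ is a shedding vertex.

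For conditions (b) and (c), the key structural observation is that $G$ and $H'$ share only the vertices $x_u$ and $x_v$ inside $G \vee_e H$, so in $\Gamma$ the $G$-side and $H$-side meet only along the shadows of $x_u$ and $x_v$. I claim that after forming $\Omega_i$ (respectively $\Psi_\ell$), every surviving shadow of $x_u$ and of $x_v$ is isolated. Indeed, for a surviving shadow $x_{u,j}$ with $j > i$, any potential neighbor arising from an edge $e_l = \{x_u, z\} \ne e$ of $G$ has the form $z_{,q}$ with $q \le k_l - j + 1 \le k_l - i$, and each such $z_{,q}$ lies in $N_{\Psi_{i-1}}(x_{u,i})$ (since $q \le k_l - i + 1$) and has been deleted. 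The hypothesis $\ell \ge k_l$ and $\ell \ge h_l$ ensures that $\{x_{u,1}, \dots, x_{u,\ell}\}$ is the full set of shadows of $x_u$, so the same calculation on the $H$-side, and the symmetric one for shadows of $x_v$, confirms the claim. Modulo isolated vertices we therefore obtain $\Omega_i \cong \Omega_i^G \cupdot \Omega_i^H$ and $\Psi_\ell \cong \Psi_\ell^G \cupdot \Psi_\ell^H$. Since the disjoint union of vertex decomposable graphs is vertex decomposable and adjoining isolated vertices preserves this property, conditions (b) and (c) follow from the hypothesis, and Lemma \ref{lem.Sel4} then yields the vertex decomposability of $\Gamma$.

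The main obstacle is the careful bookkeeping required to verify the isolation claim that decouples the $G$- and $H$-sides: one must track which shadows of $x_u$, $x_v$, and of the other neighbors of $x_u$ survive each successive deletion, and check that no edge connects a surviving $G$-side vertex to a surviving $H$-side vertex in $\Omega_i$ or $\Psi_\ell$. The hypothesis $\ell \ge k_l, h_l$ is exactly what forces this decoupling to work, since otherwise surviving shadows $x_{u,j}$ with $j > \ell$ could still reach surviving shadows of other neighbors and break the clean splitting into $G$- and $H$-parts.
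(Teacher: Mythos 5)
Your proposal is correct, and for conditions (b) and (c) it follows the paper's route exactly: both arguments reduce $\Omega_i$ and $\Psi_\ell$ to the disjoint unions $\Omega'_i \cupdot \Omega''_i$ and $\Psi'_\ell \cupdot \Psi''_\ell$ (up to isolated vertices) using the containment $N(x_{u,j}) \subseteq N(x_{u,i})$ for $j \ge i$ and the hypothesis $\ell \ge k_i, h_j$, which is precisely the decoupling the paper invokes via Corollary \ref{cor.Sel3}. Where you genuinely diverge is condition (a). The paper proves that $x_{u,i}$ is a shedding vertex of $\Psi_{i-1}$ by \emph{transferring} sheddingness across the gluing: it takes an independent set $T$ of $\Omega_i$, splits it as $U \cup W$ along the two sides, uses the hypothesis that $x_{u,i}$ is shedding in $\Psi'_{i-1}$ to enlarge $U$ inside $\Psi'_i$, and checks that the enlargement remains independent in $\Psi_i$. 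You instead observe that $x_v$ stays a leaf vertex of $G \vee_e H$, so the shadow $x_{v,\ell-i+1}$ becomes a degree-one (hence simplicial) vertex of $\Psi_{i-1}$ whose unique neighbor is $x_{u,i}$, and conclude by \cite[Corollary 7]{Woodroofe2009} exactly as in Lemma \ref{lem.Sel2}. Your version is more elementary and self-contained --- it does not actually use the shedding part of the hypothesis on $G(\ell,k_2,\dots,k_r)$ and $H(\ell,h_2,\dots,h_s)$, only the vertex decomposability of the relevant links and deletions together with the leaf structure of $e$ --- while the paper's version illustrates the more general principle that sheddingness on one side of a gluing propagates to the glued graph. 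Both are complete proofs; yours slightly weakens the hypotheses needed, at the cost of relying more heavily on $e$ being a leaf.
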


\begin{proof} In light of Lemma \ref{lem.Sel4}, the later statement follows from the former one. To prove the first statement, let $K = G \vee_e H$ and $A = \{x_{u,1}, \dots, x_{u,\ell}\}$. Set
	$$\Psi_0 = K(\ell, k_2, \dots, k_r, h_2, \dots, h_s), \Psi'_0 = G(\ell, k_2, \dots, k_r) \text{ and } \Psi''_0 = H(\ell, h_2, \dots, h_s).$$
	Furthermore, for $1 \le i \le \ell$, set
\begin{align*}
	& \Psi_i = \Psi_{i-1} \setminus x_{u,i}, \ \Omega_i = \Psi_{i-1} \setminus N_{\Psi_{i-1}}[x_{u,i}], \\
	& \Psi'_i = \Psi'_{i-1} \setminus x_{u,i}, \ \Omega'_i = \Psi'_{i-1} \setminus N_{\Psi'_{i-1}}[x_{u,i}], \\
	& \Psi''_i = \Psi''_{i-1} \setminus x_{u,i}, \ \Omega''_i = \Psi''_{i-1} \setminus N_{\Psi''_{i-1}}[x_{u,i}].
\end{align*}

Observe that, since $\ell \ge k_i$ and $\ell \ge h_j$ for all $i$ and $j$, by Corollary \ref{cor.Sel3} we have $\Psi_\ell = \Psi'_\ell \cupdot \Psi''_\ell$. Thus, $\Psi_\ell$ is vertex decomposable by the hypothesis. Moreover, since $N_\bullet(x_{u,p}) \subseteq N_\bullet(x_{u,q})$ for any $p \ge q$, where $N_\bullet$ can be $N_{K(\ell, k_2, \dots, k_r, h_2, \dots, h_s)}, N_{G(\ell, k_2, \dots, k_r)}$ or $N_{H(\ell, h_2, \dots, h_s)}$, it follows that for $1 \le i \le \ell$, we have
$$\Omega_i = \Omega'_i \cupdot \Omega''_i \cup \{\text{isolated vertices}\}.$$
This, by the hypothesis, implies that $\Omega_i$, for $1 \le i \le \ell$, is vertex decomposable.

It remains to show that $x_{u,i}$ is a shedding vertex of $\Psi_{i-1}$ for all $1 \le i \le \ell$. Indeed, consider any independent set $T$ in $\Omega_i = \Psi_{i-1} \setminus N_{\Psi_{i-1}}[x_{u,i}]$, and set $U = T \cap \Omega'_i$ and $W = T \cap \Omega''_i$. As observed before, $\Omega_i = \Omega'_i \cupdot \Omega''_i$, and so $U$ and $W$ are independent sets in $\Omega'_i$ and $\Omega''_i$, respectively. By the hypothesis, $x_{u,i}$ is a shedding vertex of $\Psi'_{i-1}$. Particularly, $U$ is not a maximal independent set in $\Psi'_i = \Psi'_{i-1} \setminus x_{u,i}$. It follows that there is an independent set $U'$ in $\Psi'_i$ strictly containing $U$. Let $T' = U' \cup W$. Then, $T'$ strictly contains $T$. It can be seen that, since $N_\bullet(x_{u,j}) \subseteq N_\bullet(x_{u,i})$ for all $j \ge i$, no vertex of $\Omega''_i = \Psi''_{i-1} \setminus N_{\Psi''_{i-1}}[x_{u,i}]$ is connected to any vertex in $\Psi'_i$. Therefore, $T'$ is an independent set of $\Psi_i$. Hence, $x_{u,i}$ is a shedding vertex of $\Psi_{i-1}$. The theorem completes.
\end{proof}

We now recall the definition of star complete graphs, following \cite{Selvaraja2019}, and state our result for adding star complete graphs and whiskers to the vertices of a cycle cover, which gives a stronger version of Theorem \ref{thm.Gedge}.

\begin{definition}[\protect{See \cite{Selvaraja2019}}]
	\label{def.SC}
	A \emph{star complete graph} is a graph obtained by joining complete graphs at a common vertex. A star complete graph is called \emph{pure} if all of its maximal cliques contain at least 3 vertices (i.e., it has no whiskers); otherwise, the star complete graph is called \emph{non-pure}.
\end{definition}

\begin{example}
	Figure \ref{fig.SC} gives examples of pure and non-pure star complete graphs. The graph depicted in Figure \ref{fig:Pure} is pure as it all three maximal cliques have 3 or more vertices. On the other hand, the graph depicted in Figure \ref{fig:nonPure} as this graph has two whiskers attached to $x$.
	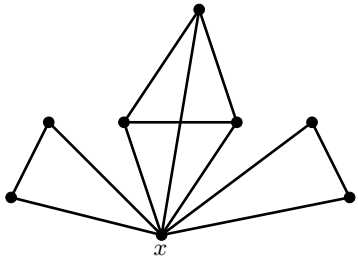
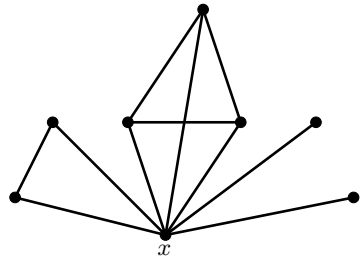
\begin{figure}[h]
		\centering
		\begin{subfigure}[h]{.5\linewidth}
			\centering
			\begin{tikzpicture}[line cap=round,line join=round,>=triangle 45,x=1cm,y=1cm]
			\clip(0.10201142272370023,1.2052607070164092) rectangle (6.559699841073407,4.724953807096036);
			\draw [line width=1pt] (3,1.5)-- (1,2);
			\draw [line width=1pt] (1,2)-- (1.5,3);
			\draw [line width=1pt] (1.5,3)-- (3,1.5);
			\draw [line width=1pt] (3,1.5)-- (2.5,3);
			\draw [line width=1pt] (2.5,3)-- (3.5,4.5);
			\draw [line width=1pt] (3.5,4.5)-- (4,3);
			\draw [line width=1pt] (4,3)-- (3,1.5);
			\draw [line width=1pt] (3,1.5)-- (3.5,4.5);
			\draw [line width=1pt] (2.5,3)-- (4,3);
			\draw [line width=1pt] (3,1.5)-- (5,3);
			\draw [line width=1pt] (5,3)-- (5.5,2);
			\draw [line width=1pt] (5.5,2)-- (3,1.5);
			\begin{scriptsize}
			\draw [fill=black] (3,1.5) circle (2pt);
			\draw[color=black] (2.9852091238667806,1.3060651854053168) node {$x$};
			\draw [fill=black] (1,2) circle (2pt);
			\draw [fill=black] (1.5,3) circle (2pt);
			\draw [fill=black] (2.5,3) circle (2pt);
			\draw [fill=black] (3.5,4.5) circle (2pt);
			\draw [fill=black] (4,3) circle (2pt);
			\draw [fill=black] (5,3) circle (2pt);
			\draw [fill=black] (5.5,2) circle (2pt);
			\end{scriptsize}
			\end{tikzpicture}
			\caption{Pure star complete graph.}\label{fig:Pure}
		\end{subfigure}%	
		\begin{subfigure}[h]{.5\linewidth}
			\centering
			\begin{tikzpicture}[line cap=round,line join=round,>=triangle 45,x=1cm,y=1cm]
			\clip(0.10201142272370023,1.2052607070164092) rectangle (6.559699841073407,4.724953807096036);
			\draw [line width=1pt] (3,1.5)-- (1,2);
			\draw [line width=1pt] (1,2)-- (1.5,3);
			\draw [line width=1pt] (1.5,3)-- (3,1.5);
			\draw [line width=1pt] (3,1.5)-- (2.5,3);
			\draw [line width=1pt] (2.5,3)-- (3.5,4.5);
			\draw [line width=1pt] (3.5,4.5)-- (4,3);
			\draw [line width=1pt] (4,3)-- (3,1.5);
			\draw [line width=1pt] (3,1.5)-- (3.5,4.5);
			\draw [line width=1pt] (2.5,3)-- (4,3);
			\draw [line width=1pt] (3,1.5)-- (5,3);
			\draw [line width=1pt] (5.5,2)-- (3,1.5);
			\begin{scriptsize}
			\draw [fill=black] (3,1.5) circle (2pt);
			\draw[color=black] (2.9852091238667806,1.3060651854053168) node {$x$};
			\draw [fill=black] (1,2) circle (2pt);
			\draw [fill=black] (1.5,3) circle (2pt);
			\draw [fill=black] (2.5,3) circle (2pt);
			\draw [fill=black] (3.5,4.5) circle (2pt);
			\draw [fill=black] (4,3) circle (2pt);
			\draw [fill=black] (5,3) circle (2pt);
			\draw [fill=black] (5.5,2) circle (2pt);
			\end{scriptsize}
			\end{tikzpicture}
			\caption{Non-pure star complete graph.}\label{fig:nonPure}
		\end{subfigure}
	\caption{Pure vs. Non-pure star complete graphs.} \label{fig.SC}
	\end{figure}	
\end{example}

\begin{theorem}
	\label{thm.glueStar}
	Let $G$ be a graph and let $S$ be a cycle cover of $G$. Let $H$ be a graph obtained by attaching a non-pure star complete graph to each vertex in $S$. Then, for all $k \ge 1$, the graph $H_k = H(k, \dots, k)$ is vertex decomposable. Particularly, $J(H)^{(k)}$ is Koszul for all $k \ge 1$.
\end{theorem}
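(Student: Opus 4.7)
The plan is to apply Theorem~\ref{thm.Glue} iteratively to build $H$ from the graph $G \cup W(S)$ (handled by Theorem~\ref{thm.Gedge}) by adjoining, one at a time, the non-whisker cliques of the non-pure star complete graphs. Because each non-pure star complete graph $SC_v$ at $v \in S$ contains at least one whisker edge $\{v, y_v\}$, this whisker will serve as the common leaf for every gluing step at $v$.

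Concretely, I would begin with $H_0 = G \cup W(S)$, where $W(S)$ incorporates every whisker appearing in all the $SC_v$. Theorem~\ref{thm.Gedge} then yields that $H_0(k, \ldots, k)$ is vertex decomposable and that the shadows of every $v \in S$ satisfy the conditions of Lemma~\ref{lem.Sel4}. Next I would enumerate the remaining non-whisker cliques as $Q_1, \ldots, Q_N$, with $Q_j$ attached at apex $v_j \in S$, and iteratively set $H_j = H_{j-1} \vee_{\{v_j, y_{v_j}\}} (Q_j \cup \{v_j, y_{v_j}\})$. Each gluing absorbs the duplicated whisker into the existing one, so that $H_N = H$. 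At every step, Theorem~\ref{thm.Glue} demands the Lemma~\ref{lem.Sel4} property for the shadows of $v_j$ in both factors: for $H_{j-1}(k, \ldots, k)$ it is carried along the induction, while for the atomic piece $(Q_j \cup \{v_j, y_{v_j}\})(k, \ldots, k)$ it must be established directly.

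The shedding condition on the shadows of $v$ in the atomic piece $(K_r \cup W(v))_k$ is immediate from Lemma~\ref{lem.Sel2}, and Corollary~\ref{cor.Sel3} reduces the vertex decomposability of the corresponding $\Psi_k$ and $\Omega_i$ to that of edge-duplicated cliques of the form $(K_{r-1})_{k'}$, $k' \le k$, together with some isolated vertices. The crux of the proof, and the main obstacle in the plan, is therefore an auxiliary lemma: \emph{for all $n, k \ge 1$ the edge-duplicated complete graph $(K_n)_k$ is vertex decomposable}. I would prove this by induction on $k$. The base case $k = 1$ is immediate since $K_n$ is chordal. For the inductive step I would invoke Lemma~\ref{lem.Sel4} with the shedding sequence $z_i = x_{i,1}$, $i = 1, \ldots, n$: at each stage, $\Omega_i$ collapses to the isolated shadows $\{x_{i,2}, \ldots, x_{i,k}\}$ (and any independent subset can be extended in $\Psi_i$ by some $x_{j,k}$ with $j \ne i$, giving the shedding condition), while the terminal graph $\Psi_n$ is isomorphic, after relabeling, to $(K_n)_{k-1}$, which is vertex decomposable by induction.

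Putting these ingredients together, Theorem~\ref{thm.Glue} delivers vertex decomposability of $H(k, \ldots, k) = H_k$ for every $k \ge 1$, and then the polarization identity~\eqref{eq.polarization} with Lemma~\ref{lem.SF} yields that $J(H)^{(k)}$ is Koszul for all $k \ge 1$. Aside from the auxiliary lemma on cliques, the only additional care required is a bookkeeping argument showing that the Lemma~\ref{lem.Sel4} property at shadows of $v_j$ survives gluings previously performed at other apices $v_i \in S$; this is expected because such gluings introduce new vertices only near $v_i$, so that in the analysis at $v_j$ they appear as a disjoint, vertex-decomposable component attached to $v_i$'s shadows.
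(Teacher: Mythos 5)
Your overall architecture coincides with the paper's: reduce to $G\cup W(S)$ via Theorem~\ref{thm.Gedge}, attach the remaining complete graphs by repeated application of Theorem~\ref{thm.Glue} along a whisker at the apex, and finish with \eqref{eq.polarization} and Lemma~\ref{lem.SF}. The differences are in how the attached pieces are handled: the paper glues one entire pure star complete graph $\K(z)$ at a time (inducting on the number of vertices of $S$ carrying a nonempty pure part) and simply cites the proof of Case 1 of Theorem 3.7 in \cite{Selvaraja2019} for the fact that the shadows of $z$ in the duplication of $\K(z)\cup\{e\}$ satisfy the hypotheses of Lemma~\ref{lem.Sel4}, whereas you glue clique-by-clique and make that step self-contained by reducing it, via Lemma~\ref{lem.Sel2} and Corollary~\ref{cor.Sel3}, to the vertex decomposability of $(K_n)_k$. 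Your auxiliary lemma is true and the shedding sequence $x_{1,1},\dots,x_{n,1}$ does work, but the terminal graph is misidentified: after deleting all first shadows the surviving edges satisfy $p+q\le k+1$ with $p,q\ge 2$, i.e.\ $p'+q'\le k-1$ after the shift $p'=p-1$, so $\Psi_n\cong (K_n)_{k-2}\cup\{\text{isolated vertices}\}$ rather than $(K_n)_{k-1}$ (for $n=2$, $k=3$ one gets a single edge plus two isolated vertices, while $(K_2)_2$ is a path on four vertices). The induction still closes, just with step size $2$ and base cases $k\le 1$, so this is an off-by-one slip rather than a gap. Finally, the ``bookkeeping'' issue you flag --- that the Lemma~\ref{lem.Sel4} property at the shadows of $v_j$ must survive gluings already performed at other apices, while Theorem~\ref{thm.Glue} only asserts that property for the shadows of the vertex used in that particular gluing --- is a genuine subtlety, but it is present in exactly the same form in the paper's own induction on $|S'|$, so your treatment is no less complete than the published one on this point; making it rigorous would require checking that the conditions of Lemma~\ref{lem.Sel4} at $v_j$ are insensitive to disjointly attached, already-decomposable pieces at other vertices.
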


\begin{proof} The second statement follows from the first statement, Lemma \ref{lem.SF} and (\ref{eq.polarization}). We shall now prove that $H_k$ is vertex decomposable for all $k \ge 1$.
	
For each vertex $x \in S$, let $\K(x)$ denote the pure star complete graph that was attached to $x$ in the construction of $H$, excluding all the whiskers at $x$. Set $S' = \{x \in S ~\big|~ \K(x) \not= \emptyset\}$. Let $G'$ be the graph obtained from $G$ by adding the whiskers at the vertices in $S$ in the construction of $H$. In other words, $G'$ is obtained from $H$ by removing $\K(x)$ for $x \in S'$. Clearly, $H = G' \cup \left(\bigcup_{x \in S'} \K(x)\right)$.

By Theorem \ref{thm.Gedge}, $G'_k$ is vertex decomposable. By induction on $|S'|$, to show that $H_k$ is vertex decomposable, it suffices to consider the case when $|S'| = 1$. In this case, let $z$ be the vertex in $S'$ and let $e$ be a whisker at $z$ in $G'$. Set $\K = \K(z) \cup \{e\}$. Then, $H = G' \vee_e \K$.

Observe that, by the proof of \cite[Case 1 of Theorem 3.7]{Selvaraja2019}, the shadows of $z$ in $\K_k$ satisfy the conditions of Lemma \ref{lem.Sel4}. 
On the other hand, by the proof of Theorem \ref{thm.Gedge}, the shadows of $z$ in $(G')_k$ satisfy the conditions of Lemma \ref{lem.Sel4}. Hence, the conclusion now follows from Theorem \ref{thm.Glue} by gluing $G'$ and $\K$ along $e$.
\end{proof}

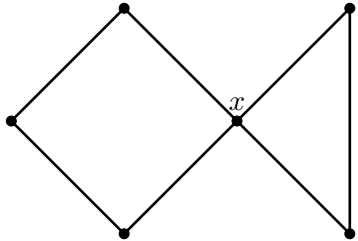
\begin{figure}[h!]
	\centering
	\begin{tikzpicture}[scale = .75, line cap=round,line join=round,>=triangle 45,x=1cm,y=1cm]
	\clip(0.49584896044806986,-0.603056366118118) rectangle (10.038026736762967,4.506243113572106);
	\draw [line width=1pt] (1,2)-- (3,4);
	\draw [line width=1pt] (3,4)-- (5,2);
	\draw [line width=1pt] (5,2)-- (3,0);
	\draw [line width=1pt] (3,0)-- (1,2);
	\draw [line width=1pt] (5,2)-- (7,4);
	\draw [line width=1pt] (5,2)-- (7,0);
	\draw [line width=1pt] (7,0)-- (7,4);
	\begin{small}
	\draw [fill=black] (1,2) circle (2.5pt);
	%\draw[color=black] (0.8676221205642347,2.213125321761844) node {$x_{3}$};
	\draw [fill=black] (3,4) circle (2.5pt);
	%\draw[color=black] (3.301359190928859,4.152541973701176) node {$x_{2}$};
	\draw [fill=black] (5,2) circle (2.5pt);
	\draw[color=black] (5,2.3) node {$x$};
	\draw [fill=black] (3,0) circle (2.5pt);
	%\draw[color=black] (3.301359190928859,-0.24849473561150702) node {$x_{4}$};
	\draw [fill=black] (7,4) circle (2.5pt);
	%\draw[color=black] (7.301359190928859,4.152541973701176) node {$x_5$};
	\draw [fill=black] (7,0) circle (2.5pt);
	%\draw[color=black] (7.301359190928859,-0.24849473561150702) node {$x_6$};
	\end{small}
	\end{tikzpicture}
	\caption{Attaching pure star complete graphs to a cycle cover.} \label{fig.nonPureNec}
\end{figure}

\begin{example}
	The condition that non-pure star complete graphs are attached to the vertices of a cycle cover in Theorem \ref{thm.glueStar} cannot be removed. Consider a 4-cycle $C_4$ and one of its vertices, say $x$. Clearly, $S = \{x\}$ is a cycle cover of $C_4$. Let $G$ be the graph obtained by attaching a triangle to $x$; see Figure \ref{fig.nonPureNec}. Then, $G$ is the ``fish'' graph as described in Figure \ref{fig.ex1}. Computation shows that $G_2$ is not vertex decomposable.
\end{example}

We end the paper with the following questions.

\begin{question}
	Is the conclusion of Theorem \ref{thm.glueStar} still true if we also add pure star complete graphs to the vertices in $G \setminus S$?
\end{question}

\begin{question}
	Find a necessary and sufficient condition on a subset $S$ of the vertices in a graph $G$ such that $J(G \cup W(S))^{(k)}$ is Koszul for all $k \ge 1$.
\end{question}

%%%%%%%%%%%%%%%%%%%%%%%%%%%%%%%%%%%%%%%%%%%%%%%%%%%
\bibliographystyle{abbrv}
\bibliography{Reference}

\end{document}